\documentclass[a4paper,12pt]{amsart}
\usepackage{amsmath}
\usepackage{amssymb}
\usepackage{amscd}
\usepackage{mathrsfs}
\usepackage{url}
\usepackage[all]{xy}
\usepackage{graphicx}
\usepackage{tikz}
\everymath{\displaystyle}
\theoremstyle{plain} 
\newtheorem{theorem}{\indent\sc Theorem}[section]
\newtheorem{lemma}[theorem]{\indent\sc Lemma}
\newtheorem{corollary}[theorem]{\indent\sc Corollary}
\newtheorem{proposition}[theorem]{\indent\sc Proposition}

\theoremstyle{definition} 

\newtheorem{remark}[theorem]{\indent\sc Remark}
\newtheorem{example}[theorem]{\indent\sc Example}

\newcommand{\del}{\partial}

\newcommand{\ve}{\varepsilon}

\begin{document}

\pagestyle{plain}
\thispagestyle{plain}

\title[Rigidity of Pseudoconvex Neighborhoods and Ohsawa--Takegoshi Extension]
{Rigidity of Pseudoconvex Neighborhoods and Ohsawa--Takegoshi Extension}

\author[Takayuki Koike and Masakazu Takakura]{Takayuki KOIKE$^{1}$ and Masakazu Takakura$^{2}$}
\address{ 
$^{1}$ Division of Mathematics \\
Institute of Pure and Applied Sciences \\
University of Tsukuba \\
1-1-1 Tennodai, Tsukuba \\
Ibaraki 305-8571 \\
Japan 
}
\email{tkoike@math.tsukuba.ac.jp}
\address{
$^{2}$ Department of Mathematical Sciences \\
Tokyo Metropolitan University \\
1-1 Minamiosawa, Hachioji-shi \\
Tokyo 192-0397, Japan
}
\email{takakura-masakazu@ed.tmu.ac.jp}
\renewcommand{\thefootnote}{\fnsymbol{footnote}}
\footnote[0]{ 
2020 \textit{Mathematics Subject Classification}.
Primary 32F17; Secondary 32A25, 32D15, 32L05.
}
\footnote[0]{ 
\textit{Key words and phrases}.
Pseudoconvex neighborhood, holomorphic tubular neighborhood,
Bergman kernel, Ohsawa--Takegoshi extension theorem, Ueda theory.
}
\renewcommand{\thefootnote}{\arabic{footnote}}

\begin{abstract} 
Let $Y$ be a holomorphic section of a holomorphic submersion
with one-dimensional fibers over a connected compact complex manifold.
Assuming that its normal bundle is unitary flat, we prove that $Y$
admits a fundamental system of locally pseudoconvex neighborhoods
if and only if it admits a holomorphic tubular neighborhood.
The proof uses the sharp Ohsawa--Takegoshi extension theorem.
As an application, we rule out such neighborhood systems
in Ueda's example of class $(\gamma)$.
\end{abstract}

\maketitle

\section{Introduction}\label{section:introduction}

In this paper, we investigate neighborhoods of a holomorphic section of a
holomorphic submersion with one-dimensional fibers over a connected compact
complex manifold. Throughout the paper, we let

\begin{itemize}
\item $Z$ be a connected compact complex manifold of dimension $n\geq 0$;
\item $X$ be a complex manifold of dimension $n+1$;
\item $p\colon X\to Z$ be a holomorphic surjective submersion;
\item $s\colon Z\to X$ be a holomorphic section of $p$ (namely, $p\circ s=\mathrm{id}_Z$); and
\item $Y:=s(Z)\subset X$. 
\end{itemize}
Then $Y$ is a non-singular compact complex hypersurface of $X$, naturally
biholomorphic to $Z$ via $s$ and $p|_Y$.

We are interested in the following question:
\emph{When does $Y$ admit a fundamental system
$\{\Omega_\nu\}_{\nu\in\mathbb{Z}_{\geq 0}}$ of locally pseudoconvex neighborhoods?}
In this paper, we study this question under the assumption that the normal
bundle $N_{Y/X}$ of $Y$ is \emph{unitary flat}: for a suitable choice of
holomorphic local frames, its transition functions are locally constant
and take values in $\mathrm{U}(1):=\{t\in\mathbb{C}\mid |t|=1\}$.
Declaring these local frames to have norm $1$ defines a Hermitian metric
whose Chern curvature vanishes identically (a
\emph{flat metric} on $N_{Y/X}$).
In the above setting of a holomorphic surjective submersion
$p\colon X\to Z$ with a holomorphic section $s\colon Z\to X$,
the existence of a fundamental system of locally pseudoconvex
neighborhoods of $Y$ implies that the conormal bundle $N_{Y/X}^*$ is
pseudoeffective (see Lemma~\ref{lem:motomoto_Ast}).
The unitary flat case may therefore be viewed as a borderline case
for the existence of such a neighborhood system.

Basic examples are provided by the zero sections of unit disc bundles
in unitary flat line bundles: shrinking the fiber discs gives locally
pseudoconvex neighborhoods. 
On the other hand, the neighborhood geometry of a compact hypersurface
need not be so simple in general.
In \cite[\S 5.4]{U}, Ueda constructed an example $p\colon X\to Z$ with a
holomorphic section $Y=s(Z)$ with unitary flat normal bundle and $n=1$ such that, for every positive integer
$m_0$ and every sufficiently small neighborhood $\Omega$ of $Y$ in $X$,
there exist an integer $m>m_0$ and a compact non-singular complex curve
$\Gamma\subset\Omega\setminus Y$ homologous to $mY$ in $X$.
For this example and related examples, the first author and Ogawa obtained
results on plurisubharmonic functions near $Y$ \cite{K,KO}, which suggest
restrictions on the existence of pseudoconcave neighborhoods of $Y$.
These results alone, however, do not determine whether $Y$ admits a
fundamental system of locally pseudoconvex neighborhoods.
We prove the following theorem, which implies that no such system exists
in Ueda's example (see Corollary~\ref{cor:main}).

\begin{theorem}\label{thm:main1}
Let $p\colon X\to Z$ be a holomorphic surjective submersion from a complex
manifold $X$ to a connected compact complex manifold, and let $Y$ be a
section of $p$. Assume that $N_{Y/X}$ is a unitary flat line bundle. Then the following
are equivalent:\\
$(i)$ There exists a fundamental system $\{\Omega_\nu\}_{\nu\in\mathbb{Z}_{\geq 0}}$ of neighborhoods of $Y$ in $X$ such that each $\Omega_\nu$ is locally pseudoconvex. \\
$(ii)$ There exists a fundamental system $\{\Omega_\nu\}_{\nu\in\mathbb{Z}_{\geq 0}}$ of neighborhoods of $Y$ in $X$ such that each $\Omega_\nu$ is locally pseudoconvex and each fiber of $p|_{\Omega_\nu}$ is connected and simply connected. \\ 
$(iii)$ There exists a holomorphic tubular neighborhood of $Y$ in $X$. 
\end{theorem}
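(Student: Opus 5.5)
We prove $(iii)\Rightarrow(ii)\Rightarrow(i)\Rightarrow(iii)$. The first two implications are soft. For $(iii)\Rightarrow(ii)$: a holomorphic tubular neighborhood identifies a neighborhood $V$ of $Y$ biholomorphically with a neighborhood of the zero section of $N_{Y/X}$. Because $N_{Y/X}$ is unitary flat, the flat metric $h$ makes $\Omega_\nu:=\{|\xi|_h<1/\nu\}$ a disc bundle, and it is locally pseudoconvex since $\log|\xi|_h^2$ is pluriharmonic off the zero section. The implication $(ii)\Rightarrow(i)$ is trivial. One subtlety in $(iii)\Rightarrow(ii)$ is that the fibers of $p$ must match the fibers of the disc bundle. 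I would fix this by choosing the tubular neighborhood compatible with $p$: flatness gives local fiber coordinates $w_j$ along $p$ with $w_j=t_{jk}w_k$ on $Y$ to first order. Alternatively, I would show $p$-fiber discs with respect to a pluriharmonic defining function are simply connected, by the maximum principle on each fiber curve.

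The substance is $(i)\Rightarrow(iii)$, which I would route through $(ii)$. First I upgrade $(i)$ to $(ii)$. Given a locally pseudoconvex $\Omega_\nu$, take the connected component of $\Omega_\nu\cap p^{-1}(z)$ containing $s(z)$, and then pass to simply connected fibers. A domain in a Riemann surface is simply connected iff it has no holes. Filling holes in the fibers of $\Omega_\nu$ over a small ball preserves pseudoconvexity, by the Hartogs-type characterization via $-\log\delta$ and the Kontinuitätssatz, and the filled set stays inside a slightly larger neighborhood for $\nu$ large. Near $Y$, with $\Omega_\nu$ shrinking, each fiber component is a small planar domain around $s(z)$.

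Now fix $\Omega=\Omega_\nu$ as in $(ii)$. Each fiber $\Omega_z$ is a simply connected hyperbolic planar domain, so it carries the Riemann map $f_z\colon\Omega_z\to\mathbb{D}$ with $f_z(s(z))=0$, normalized so $f_z'(s(z))>0$ in a chosen frame of $N_{Y/X}$. The goal is to show that $(z,w)\mapsto f_z(w)$, suitably twisted, is holomorphic in $z$. The tool is the fiberwise Bergman kernel $K_z(w,\bar w)$ for the relative canonical bundle, restricted to the diagonal at $s(z)$. Maitani--Yamaguchi and Berndtsson's positivity of direct images give that $\log K_z(s(z))$ is plurisubharmonic in $z$; more precisely, it is a psh weight on $N_{Y/X}$, or its dual, after identifying $K_{\Omega_z}|_{s(z)}$ with $N^*_{Y/X}$. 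This quantity is a psh function on the compact $Z$ twisted by a flat metric, so it is locally $\log$ of a flat norm plus a global psh function $\phi$. Hence $\phi$ is constant by compactness and the maximum principle. The sharp Ohsawa--Takegoshi theorem enters in the equality case: on a simply connected domain, Suita's equality $c_\beta^2=\pi K$ holds, and the constancy forces the extremal case of Berndtsson--Lempert's OT proof along $Z$. The extremal case gives that the minimal $L^2$ extensions vary holomorphically and are fiberwise the derivatives $f_z'$. Integrating yields a holomorphic map $F\colon\Omega\to N_{Y/X}$ whose restriction to each fiber is a biholomorphism onto a disc of constant flat radius. This $F$ is the holomorphic tubular neighborhood.

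The main obstacle is this rigidity step. The psh-variation argument alone yields constancy of the Bergman kernel density, and one must extract holomorphy of the whole family $f_z$. I would first try Berndtsson's strict-positivity criterion: a flat direct image $p_*(K_{\Omega/Z})$ on compact $Z$ forces the variation to be trivial, i.e. there is a holomorphic vector field lifting $\partial/\partial z$ that preserves fibers. Failing that, I would use the sharp OT estimate with equality to identify the minimal extension with $f_z'$ directly.
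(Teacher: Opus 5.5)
Your core mechanism is the paper's: the diagonal Bergman kernel at $s(z)$ as a plurisubharmonic weight on $N^*_{Y/X}$, its constancy via unitary flatness and the maximum principle on $Z$, then the equality case of the sharp Ohsawa--Takegoshi argument. The chain breaks at your upgrade $(i)\Rightarrow(ii)$. Your rigidity step depends on it, because you phrase it through Riemann maps of simply connected fibers. Filling the holes of the fibers does \emph{not} preserve pseudoconvexity, even over a small ball and inside an arbitrarily thin neighborhood of the section. Over $|z|<\rho$ take $\Omega=\{|w|<2,\ |w-\tfrac32-z|>\tfrac12\}$. It is an intersection of two pseudoconvex domains and contains $\{w=0\}$; rescaling $w$ makes it arbitrarily thin. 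For $|\tfrac32+z|<\tfrac32$ the removed closed disc is a hole of $\Omega_z$; otherwise it meets $|w|=2$ and $\Omega_z$ is already simply connected. One checks that the filled set $\hat\Omega$ contains the discs $\{(z,1-\epsilon)\mid |z|\le r\}$ for all small $\epsilon>0$ and the circle $\{(z,1)\mid |z|=r\}$, but not $(0,1)$. So the Kontinuit\"atssatz fails and $\hat\Omega$ is not pseudoconvex. The detour is also unnecessary. The Bergman-kernel argument works on the fibers of $\Omega_\nu$ as they are (use the component through $s(z)$). Once $(z,w)\mapsto k_{\Omega_z}(w,0)$ is holomorphic, you only integrate $k_{\Omega_z}(w_j,0)\,dw_j$ over the uniform product discs $\{|w_j|<r\}\subset\Omega$ given by compactness of $Z$. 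This yields $\xi_j$ with $\xi_j=t_{jk}\xi_k$ and $d\xi_j=dw_j$ along $Y$, hence $(iii)$; then $\bigcup_j\{|\xi_j|<\varepsilon\}$ gives $(ii)$ for free, as in the paper.

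Two further points. First, you leave the rigidity step as a choice between strategies, and the first one is not available. You know only that the single function $z\mapsto\log k_{\Omega_z}(0)$ is constant, not that a direct image bundle is flat, and Suita's equality plays no role. Your fallback is the right mechanism and should be the argument. Extend $k_{\Omega_0}(w,0)\,dw$ by Theorem~\ref{thm:sharp_OT} to $F$ over a ball. The chain $2\log k_{\Omega_0}(0)\le\tfrac{1}{\sigma_n}\int\log|F(z,0)|^2\le\cdots\le\tfrac{1}{\sigma_n}\int\log k_{\Omega_z}(0)+\log k_{\Omega_0}(0)$ then collapses to equalities when $\log k_{\Omega_z}(0)$ is pluriharmonic. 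This forces equality in Lemma~\ref{lem:b-kerbasic} on almost every fiber, i.e.\ $F(z,w)/F(z,0)=k_{\Omega_z}(w,0)/k_{\Omega_z}(0)$. Second, in $(iii)\Rightarrow(ii)$, ``choosing the tubular neighborhood compatible with $p$'' is circular. Coordinates with $w_j=t_{jk}w_k$ exactly, not just to first order, are what Theorem~\ref{thm:main2} produces. Your alternative does work for small $\varepsilon$. However, connectedness of $\{|\Phi|_h<\varepsilon\}\cap X_z$ comes from uniform injectivity of the local defining functions on $p$-fibers near $Y$, not from the maximum principle. The paper sidesteps this by proving only $(iii)\Rightarrow(i)$ and obtaining $(ii)$ from $(i)$.
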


In assertion $(iii)$ of Theorem~\ref{thm:main1}, a \emph{holomorphic tubular
neighborhood} of $Y$ in $X$ means an open neighborhood $\Omega\subset X$ of
$Y$ for which there exist a neighborhood $\Omega'$ of the zero section in
the total space of $N_{Y/X}$ and a biholomorphism
\begin{equation}\label{eq:holomorphictubularneighborhoodnodefnodoukei}
\Phi\colon \Omega \to \Omega'
\end{equation}
such that $\Phi(Y)$ is the zero section and $(\Phi|_Y)^{-1}$ is the
restriction of the projection $N_{Y/X}\to Y$ to the zero section,
with $Y$ and $Z$ identified by $p|_Y$.

More precisely, the relation between assertions $(ii)$ and $(iii)$ is
expressed by the following theorem.
\begin{theorem}\label{thm:main2}
Let $p\colon X\to Z$ be as in Theorem~\ref{thm:main1}, and let $Y$ be a
section of $p$. Assume that $N_{Y/X}$ is unitary flat.
Let $\{\Omega_\nu\}_{\nu\in\mathbb{Z}_{\geq 0}}$ be a fundamental system
of neighborhoods of $Y$ as in assertion $(ii)$ of Theorem~\ref{thm:main1},
with $\Omega_{\nu+1}\subset\Omega_\nu$ for every
$\nu\in\mathbb{Z}_{\geq 0}$.
Then, for a sufficiently large $\nu$, there exists $\Phi=\Phi_\nu\colon\Omega\to N_{Y/X}$ is as in
\eqref{eq:holomorphictubularneighborhoodnodefnodoukei} for $\Omega=\Omega_\nu$ such that 
\[
\Phi(\Omega_\nu) = \{\xi \in N_{Y/X} \mid |\xi|_h < 1\}
\]
holds for a flat metric $h$ on $N_{Y/X}$ and that $\Phi$ is fiber preserving:
$\pi\circ\Phi=s\circ p$, where $\pi\colon N_{Y/X}\to Y$ is the projection.
In particular, $\Omega_\nu$ is a holomorphic tubular
neighborhood of $Y$ for a sufficiently large $\nu$. 
\end{theorem}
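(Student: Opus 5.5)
We will use the Bergman kernels of the fibers together with the sharp Ohsawa--Takegoshi extension theorem. Fix $\nu$ and set $\Omega:=\Omega_\nu$. For $z\in Z$ let $D_z:=p^{-1}(z)\cap\Omega$. By assumption $D_z$ is a connected, simply connected Riemann surface containing the point $s(z)$, so it is biholomorphic to the disc, to $\mathbb{C}$, or to $\mathbb{P}^1$. Compactness of $\mathbb{P}^1$ rules out the last case for small neighborhoods. The plan has three steps: show that every $D_z$ is a disc and uniformize it by a Riemann map normalized at $s(z)$; show that these Riemann maps depend holomorphically on $z$ and glue, via the normal bundle, into a fiber preserving map $\Phi$; and show that the radii are governed by a flat metric, so that $\Phi(\Omega)$ is the unit disc bundle.

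\textbf{Step 1: the fiberwise Bergman kernel.} For $z\in Z$ let $K_z$ be the Bergman kernel of $D_z$ for $(1,0)$-forms, evaluated at $s(z)$ and restricted to the tangent direction of the fiber. Via $dp$, the tangent line of the fiber at $s(z)$ is identified with $N_{Y/X}$, so $K_z$ defines a (possibly singular) metric on $N^*_{Y/X}$, equivalently a weight on $N_{Y/X}$. Local pseudoconvexity of $\Omega$ and Maitani--Yamaguchi / Berndtsson's theorem on the plurisubharmonic variation of fiberwise Bergman kernels imply that $\log K_z$ is plurisubharmonic in $z$ as a metric on $N_{Y/X}^*$. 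Thus the conormal bundle acquires a metric with semipositive curvature current.

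Since $N_{Y/X}$ is unitary flat, compare this metric with a flat metric $h$. The ratio $\varphi(z):=\log(K_z/|\cdot|_{h^*}^2)$ is then a globally defined plurisubharmonic function on the compact connected manifold $Z$. It is finite everywhere because $\Omega$ is small: for small neighborhoods no fiber can be $\mathbb{C}$, by a comparison of $\Omega_\nu$ with coordinate polydisc neighborhoods. Hence $\varphi$ is constant. So $K_z=c\,|\cdot|^2_{h^*}$ for a constant $c>0$, and in particular every $D_z$ is hyperbolic, i.e.\ a disc.

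\textbf{Step 2: rigidity via sharp Ohsawa--Takegoshi.} This is the step I expect to be the main obstacle. We must turn the constancy of $\varphi$ into holomorphic dependence of the Riemann maps $f_z\colon D_z\to\mathbb{D}$ with $f_z(s(z))=0$ and $f'_z(s(z))$ real relative to the frame. The idea follows the equality case of the sharp Ohsawa--Takegoshi / Suita-type results, as in Berndtsson--Lempert and Guan--Zhou. Take a local trivialization $U\subset Z$, a flat frame $e$ of $N_{Y/X}$, and a normalized holomorphic function $g(z)$ on a small disc in $U$. Extend the form $g(z)\,e^*$ from $Y\cap p^{-1}(U)$ to $\Omega\cap p^{-1}(U)$ by the sharp estimate, with $L^2$ norm bounded by the constant dictated by $c$. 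Fiberwise, each restriction has $L^2$ norm at least the Bergman extremal value $1/K_z$, and because $\varphi$ is constant this lower bound coincides with the sharp upper bound. Equality therefore forces the restriction to each fiber to be exactly the fiberwise Bergman extremal form. Consequently the extremal forms $\kappa_z=f'_z\,d\zeta$ assemble into a holomorphic object on $p^{-1}(U)\cap\Omega$. Integrating along the fibers from $s(z)$, which is possible because $D_z$ is simply connected, gives holomorphic functions $F(x)=f_{p(x)}(x)$ in families. On overlaps the local frames change by unitary constants, so the $F$'s glue to a fiber preserving map $\Phi\colon\Omega\to N_{Y/X}$, $\Phi(x)=F(x)\,e$.

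\textbf{Step 3: conclusion.} Since each $f_z$ is a Riemann map, $\Phi$ is injective and holomorphic, hence a biholomorphism onto its image. It satisfies $\pi\circ\Phi=s\circ p$ and sends $Y$ to the zero section. Each fiber is mapped onto the disc of $h$-radius $1$, after rescaling $h$ by the constant $c$ (the Bergman kernel of the unit disc at $0$ is $1/\pi$). Therefore $\Phi(\Omega)=\{|\xi|_h<1\}$. The differential of $\Phi$ along $Y$ is the identity on $N_{Y/X}$ because of the normalization of $f_z'$, so $\Phi$ has the form required in \eqref{eq:holomorphictubularneighborhoodnodefnodoukei}.

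The delicate points are the following. The first is verifying the hypotheses of the sharp extension theorem on the noncompact pieces $\Omega\cap p^{-1}(U)$. For this I would apply it on Stein pieces, taking $p^{-1}(U)\cap\Omega$ with $U$ a small ball, which is pseudoconvex, and extending from the hypersurface $Y$ with weight $\log|w|^2$ in a fiber coordinate. The second is carrying out the equality-case argument cleanly. A related subtlety is ensuring that $\varphi$ is everywhere finite and upper semicontinuous, which is where taking $\nu$ large is used.
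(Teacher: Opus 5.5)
Your Steps 1 and 3 agree with the paper. The fiberwise Bergman kernels give a psh weight on $N^*_{Y/X}$, which against a flat metric becomes a global psh function on the compact $Z$, hence constant. At the end you integrate the extremal forms along the simply connected fibers and glue with the unitary transition constants.

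The gap is in Step 2, the only place where holomorphic dependence on $z$ is produced. You extend $g(z)e^*$ from the hypersurface $Y=\{w=0\}$ to $\Omega\cap p^{-1}(U)$ with weight $\log|w|^2$, and you claim an $L^2$ bound ``dictated by $c$''. No such sharp estimate is available.

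\begin{itemize}
\item Suppose $\Omega\cap p^{-1}(U)\subset U\times\{|w|<R\}$. The sharp estimate with weight $\log(|w|^2/R^2)$ has constant $\pi R^2$. On the other hand, $D_z\subset\{|w|<R\}$ gives $1/K_z\le\pi R^2$, with equality only when $D_z=\{|w|<R\}$. So your chain $\int_U|g|^2/K_z\le\int|F|^2\le\pi R^2\int_U|g|^2$ closes only if you already know the conclusion.
\item To lower the constant to $1/K_z$ you would need a negative psh function $\Psi$ on $\Omega$ with $\Psi-\log|w|^2\to\log(\pi K_z)$ along $Y$. By Suita's equality for discs and the extremality of Green functions, its restriction to each fiber would have to be $2G_{D_z}(\cdot,s(z))$. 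In other words, the fiberwise Green function would have to be plurisubharmonic.
\item That is not known a priori, and it fails for general pseudoconvex families. On the convex domain $\{|z|<1,\ |w-\bar z/2|<1\}$ one has $G_{\Omega_z}(w,0)=\log|w|-\log|1-|z|^2/4+zw/2|$. Its complex Hessian has vanishing $w\bar w$-entry but nonzero $w\bar z$-entry for $z\neq 0$, so it is not psh.
\end{itemize}

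Note also that the constancy of $\varphi$ never actually enters your chain: the right-hand constant comes from the weight, not from $c$.

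The missing idea is to extend in the other direction, as in the Guan--Zhou proof of Theorem~\ref{thm:bergman-log-psh}.

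\begin{itemize}
\item Fix $z_0$ and a ball $B$ about $z_0$ in the base. Extend the extremal form $k_{\Omega_{z_0}}(w,0)\,dw$ from the single fiber $\Omega_{z_0}$ to $\Omega\cap p^{-1}(B)$ with $\sigma_n^{-1}\int|F|^2\,d\lambda\le k_{\Omega_{z_0}}(0)$. Here the sharp constant comes from the base ball, not from the fiber.
\item Combine this with the fiberwise extremal inequality $|F(z,0)|^2/k_{\Omega_z}(0)\le\int_{\Omega_z}|F(z,\cdot)|^2\,d\lambda_w$, Fubini, Jensen's inequality, and the sub-mean value property of $\log|F(z,0)|^2$. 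After normalizing $B$ to the unit ball, this yields $\log k_{\Omega_{z_0}}(0)\le\sigma_n^{-1}\int_B\log k_{\Omega_z}(0)\,d\lambda_z$.
\item Constancy of $\log k_{\Omega_z}(0)$ now forces every inequality in this chain to be an equality. Hence $F(z,\cdot)$ is a multiple of the extremal form on almost every fiber. It follows that $k_{\Omega_z}(w,0)/k_{\Omega_z}(0)=F(z,w)/F(z,0)$ is holomorphic in $(z,w)$ (Proposition~\ref{prop:kyoutuketasita}).
\end{itemize}

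With this in place of your Step 2, the rest of your outline goes through as in the paper.
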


Our results are related to rigidity questions for holomorphic families of
Riemann surfaces. Nishino's rigidity theorem \cite{N} states that a
holomorphic submersion from a Stein surface onto the unit disc, all of
whose fibers are biholomorphic to $\mathbb{C}$, is holomorphically trivial.
Pseudoconvex variations of planar Riemann surfaces have also been studied
in detail using boundary regularity and related assumptions; see, for
example, \cite{HMY,Z}.
One advantage of our approach, based on the Ohsawa--Takegoshi extension
theorem, is that it yields rigidity results without imposing any such
boundary regularity assumptions (see Remark~\ref{rmk:newnew}).

The paper is organized as follows.
Section~2 recalls the Bergman kernel, sharp $L^2$ extension estimates,
and the variation of Bergman kernels. It also discusses the conormal bundle and the choice of
local coordinates. Section~3 proves the main theorems.
Section~\ref{section:localautsusp} treats suspension neighborhoods of
elliptic curves and the application to Ueda's example.
\vskip3mm
We use the following notation and conventions:
\begin{itemize}
\item $\mathrm{U}(1):=\{t\in \mathbb{C}\mid |t|=1\}$; 
\item $\mathbb{C}^*:=\mathbb{C}\setminus\{0\}$; 
\item Local coordinates on $Z$ are denoted by $z=(z^1,z^2,\dots,z^n)$;
\item A fiber coordinate for $p$ is denoted by $w$, so that local coordinates on $X$ are $(z,w)=(z^1,z^2,\dots,z^n,w)$;
\item $\Delta:=\{w\in\mathbb{C}\mid |w|<1\}$ denotes the unit disc; 
\item $\mathbb{B}^n$ denotes the unit ball in $\mathbb{C}^n$; 
\item $\sigma_n$ denotes the Euclidean volume of $\mathbb{B}^n$;
\item $X_z:=p^{-1}(z)$; 
\item For an open neighborhood $\Omega$ of $Y$ in $X$ and $z\in Z$, $\Omega_z$ denotes the fiber $(p|_\Omega)^{-1}(z)=\Omega\cap X_z$;
\item For an open neighborhood $\Omega$ of $Y$ in $X$ and $z\in Z$,
$K_{\Omega_z}$ denotes the Bergman kernel of $\Omega_z$; the definition
and related notation are given in \S\ref{subsection:bergmanker_optimalOT};
\item For a complex manifold $W$, $T_W$ and $T_W^*$ denote its holomorphic
tangent and cotangent bundles, respectively;
\item $N_{Y/X}$ denotes the holomorphic normal bundle of $Y$ in $X$, namely $N_{Y/X}:=T_X|_Y/T_Y$; 
\item $N_{Y/X}^*$ denotes the holomorphic conormal bundle of $Y$ in $X$.
\end{itemize}

\vskip3mm
{\bf Acknowledgment. } 
The first author is supported by the Grant-in-Aid for Scientific Research C (23K03119) from JSPS. 
The second author is supported by JST SPRING, Japan Grant Number JPMJSP2156.

\vskip3mm
\noindent{\bf AI usage statement. }
ChatGPT (OpenAI) was used for language editing, including
the detection and correction of minor typographical errors.
All mathematical ideas and arguments, as well as the formulation of the
results, are entirely due to the authors.

\section{Preliminaries}

\subsection{Variation of Bergman kernels and sharp Ohsawa--Takegoshi extension}\label{subsection:bergmanker_optimalOT}

Although the following discussion also applies in higher dimensions,
we restrict ourselves to one complex variable for simplicity.
For details, see also \cite[Chapter~4]{O}.

In the discussion below, bounded open sets need not be connected.
For an extremal problem at a specified point, one may work on the component
containing that point and extend by zero to the other components;
sharp extension estimates likewise apply componentwise.
Thus the statements below also apply to the possibly disconnected fibers
$\Omega_z$ considered later.

Let $\Omega\subset\mathbb{C}$ be a bounded domain.
We denote by
$\Gamma(\Omega,\mathcal{O}_{\Omega}(T_{\Omega}^*))$
the space of holomorphic sections of the holomorphic
cotangent bundle $T_{\Omega}^*$, that is,
the space of holomorphic $1$-forms on $\Omega$.
The Bergman space of $\Omega$ is the Hilbert space
\[
A^2(\Omega)
:=
\left\{
\eta\in\Gamma(\Omega,\mathcal{O}_{\Omega}(T_{\Omega}^*))
\;\middle|\;
\|\eta\|_{\Omega}^2
:=
\frac{\sqrt{-1}}{2}
\int_{\Omega}\eta\wedge\overline{\eta}
<\infty
\right\}.
\]
Let $\{\eta_\nu\}_{\nu=0}^\infty$ be an orthonormal
basis of $A^2(\Omega)$. The kernel
\begin{equation}\label{eq:def_bergmanker_sumCONB}
K_{\Omega}(w,w')
=
\sum_{\nu=0}^\infty
\eta_\nu(w)\otimes\overline{\eta_\nu(w')},
\qquad w,w'\in\Omega,
\end{equation}
is called the Bergman kernel of $\Omega$.
It is independent of the choice of orthonormal basis.

For example, for the unit disk $\Delta$, using the
orthonormal basis
\[
\eta_\nu(w)
=
\sqrt{\frac{\nu+1}{\pi}}\,w^\nu\,dw,
\qquad \nu\in\mathbb{Z}_{\geq 0},
\]
we obtain
\begin{equation}\label{eq:def_bergmanker_sumCONB_forunitdisc}
K_\Delta(w,w')
=
\frac{dw\otimes d\overline{w'}}
{\pi(1-w\overline{w'})^2}.
\end{equation}

In what follows, we write
\[
K_{\Omega}(w,w')
=
k_{\Omega}(w,w')\,dw\otimes d\overline{w'},
\qquad
k_{\Omega}(w):=k_{\Omega}(w,w).
\]
The Bergman kernel satisfies the reproducing property
\begin{equation}\label{eq:bergman_reproducing}
f(w')
=
\frac{\sqrt{-1}}{2}
\int_{\Omega}
f(w)\,\overline{k_{\Omega}(w,w')}\,
dw\wedge d\overline{w}
\end{equation}
for every $f(w)\,dw\in A^2(\Omega)$ and $w'\in\Omega$.
This follows from \eqref{eq:def_bergmanker_sumCONB}
and the orthonormality of $\{\eta_\nu\}_{\nu=0}^\infty$.

It also follows from \eqref{eq:def_bergmanker_sumCONB}
that, for every $w_0\in\Omega$,
the function $w\mapsto k_{\Omega}(w,w_0)$
is holomorphic on $\Omega$.
Moreover, $k_{\Omega}(w_0)>0$ for every $w_0\in\Omega$.
Indeed, since $\Omega$ is bounded,
we can choose an orthonormal basis
$\{\eta_\nu\}_{\nu=0}^\infty$ with
\[
\eta_0
=
\frac{dw}{\|dw\|_{\Omega}}.
\]
Then \eqref{eq:def_bergmanker_sumCONB} gives
\[
k_{\Omega}(w_0)
\geq
\frac{1}{\|dw\|_{\Omega}^2}>0.
\]

We recall the following extremal property
of the Bergman kernel.

\begin{lemma}\label{lem:b-kerbasic}
Let $\Omega\subset\mathbb{C}$ be a bounded domain.
Then, for every $w_0\in\Omega$ and $a\in\mathbb{C}$,
\[
\frac{|a|^2}{k_{\Omega}(w_0)}
=
\min\left\{
\|f(w)\,dw\|_{\Omega}^2
\;\middle|\;
f(w)\,dw\in A^2(\Omega),\ f(w_0)=a
\right\}.
\]
Moreover, the minimum is attained uniquely by
\[
a\,\frac{k_{\Omega}(w,w_0)}
{k_{\Omega}(w_0)}\,dw.
\]
\end{lemma}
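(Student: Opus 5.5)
The plan is to read the statement off the reproducing property \eqref{eq:bergman_reproducing} via Cauchy--Schwarz in $A^2(\Omega)$. Put $\kappa_{w_0}:=k_\Omega(w,w_0)\,dw$. We first check that $\kappa_{w_0}\in A^2(\Omega)$ and compute its norm. Formally, \eqref{eq:def_bergmanker_sumCONB} gives $\kappa_{w_0}=\sum_\nu \overline{c_\nu(w_0)}\,\eta_\nu$, where $\eta_\nu=c_\nu(w)\,dw$. Its coefficient sequence is square-summable with $\sum_\nu|c_\nu(w_0)|^2=k_\Omega(w_0)<\infty$; this finiteness holds because the series defining $k_\Omega(w_0)$ converges. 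Hence the series converges in $A^2(\Omega)$ and satisfies $\|\kappa_{w_0}\|_\Omega^2=k_\Omega(w_0)$. Alternatively, we can apply \eqref{eq:bergman_reproducing} to $f=k_\Omega(\cdot,w_0)$ itself, once membership is known.

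With this in hand, \eqref{eq:bergman_reproducing} states exactly that $f(w_0)=\langle f\,dw,\kappa_{w_0}\rangle$ for every $f\,dw\in A^2(\Omega)$, where $\langle\cdot,\cdot\rangle$ is the inner product associated with $\|\cdot\|_\Omega$. Now take any competitor $f\,dw$ with $f(w_0)=a$. Cauchy--Schwarz gives
\[
|a|^2=|\langle f\,dw,\kappa_{w_0}\rangle|^2\le \|f\,dw\|_\Omega^2\,k_\Omega(w_0).
\]
Since $k_\Omega(w_0)>0$ was shown above, this yields the lower bound $\|f\,dw\|_\Omega^2\ge |a|^2/k_\Omega(w_0)$.

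Next we show the bound is attained. The candidate $g:=a\,\kappa_{w_0}/k_\Omega(w_0)$ satisfies $g(w_0)=a$, because its value at $w_0$ is $a\,k_\Omega(w_0,w_0)/k_\Omega(w_0)$. Its squared norm is $|a|^2k_\Omega(w_0)/k_\Omega(w_0)^2=|a|^2/k_\Omega(w_0)$, so the minimum exists and equals the claimed value.

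For uniqueness we use the equality case of Cauchy--Schwarz. If $a\neq0$, equality forces $f\,dw=\lambda\kappa_{w_0}$ for some $\lambda\in\mathbb{C}$. Evaluating at $w_0$ gives $\lambda k_\Omega(w_0)=a$, so $f\,dw=g$. If $a=0$, the minimal norm is $0$, which forces $f\,dw=0=g$. An alternative route to uniqueness is to decompose any competitor as $g+h$ with $h(w_0)=0$. Then $\langle h,\kappa_{w_0}\rangle=0$, so $h\perp g$ and $\|g+h\|^2=\|g\|^2+\|h\|^2$.

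The only step needing care is the first one: justifying $\kappa_{w_0}\in A^2$ and its norm identity. This amounts to the pointwise convergence of the expansion to the holomorphic function $k_\Omega(\cdot,w_0)$, which holds because $L^2$ convergence in the Bergman space implies locally uniform convergence. The same fact is already used in the paper's claim that $w\mapsto k_\Omega(w,w_0)$ is holomorphic. For disconnected $\Omega$, everything works componentwise, as remarked earlier.
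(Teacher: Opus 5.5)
Your proposal is correct and follows the paper's proof: it uses the reproducing property with Cauchy--Schwarz for the lower bound, attains the bound with the normalized kernel form $a\,k_\Omega(w,w_0)\,dw/k_\Omega(w_0)$, and gets uniqueness from the equality case of Cauchy--Schwarz. Your explicit check that $k_\Omega(\cdot,w_0)\,dw\in A^2(\Omega)$ with squared norm $k_\Omega(w_0)$ fills in a step the paper simply asserts.
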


\begin{proof}
By the reproducing property
\eqref{eq:bergman_reproducing}
and the Cauchy--Schwarz inequality, we have
\[
|f(w_0)|^2
\leq
k_{\Omega}(w_0)
\|f(w)\,dw\|_{\Omega}^2
\]
for every $f(w)\,dw\in A^2(\Omega)$.
Hence, if $f(w_0)=a$, then
\[
\|f(w)\,dw\|_{\Omega}^2
\geq
\frac{|a|^2}{k_{\Omega}(w_0)}.
\]
On the other hand, the form
\[
f(w)\,dw
=
a\,\frac{k_{\Omega}(w,w_0)}
{k_{\Omega}(w_0)}\,dw
\]
belongs to $A^2(\Omega)$, satisfies $f(w_0)=a$,
and has squared norm $|a|^2/k_{\Omega}(w_0)$.
Thus it attains the minimum.

Finally, equality in the Cauchy--Schwarz inequality
implies that every minimizer is a constant multiple of
$k_{\Omega}(w,w_0)\,dw$.
The condition $f(w_0)=a$ determines this constant
uniquely.
\end{proof}

\begin{example}\label{ex:bergman-simply-connected}
For the unit disc $\Delta$,
\eqref{eq:def_bergmanker_sumCONB_forunitdisc} gives
$k_\Delta(w,0)=k_\Delta(0)=1/\pi$.
Thus, among the forms $f(w)\,dw\in A^2(\Delta)$ with $f(0)=a$,
the unique minimizer is $a\,dw$, whose squared norm is $\pi|a|^2$,
as in Lemma~\ref{lem:b-kerbasic}.
More generally, let $\Omega\subset\mathbb{C}$ be a bounded simply
connected domain containing $0$.
Take a Riemann map $\varphi\colon\Omega\to\Delta$ normalized by
$\varphi(0)=0$ and $\varphi'(0)>0$.
By the transformation law of the Bergman kernel,
\[
k_\Omega(w,0)\,dw
=\frac{\varphi'(0)}{\pi}\,d\varphi(w),
\qquad
k_\Omega(0)=\frac{\varphi'(0)^2}{\pi}.
\]
Hence the minimizer in Lemma~\ref{lem:b-kerbasic} for $w_0=0$ is
\[
a\,\frac{\varphi'(w)}{\varphi'(0)}\,dw,
\]
with squared norm $\pi|a|^2/\varphi'(0)^2$.
In particular, if $k_\Omega(0)=1$, then
\[
\int_0^w k_\Omega(\zeta,0)\,d\zeta
=\frac{\varphi(w)}{\sqrt{\pi}},
\]
which maps $\Omega$ biholomorphically onto
$\{\xi\in\mathbb{C}\mid |\xi|<1/\sqrt{\pi}\}$.
\end{example}

Let $\Omega\subset\mathbb{C}^{n+1}
=\mathbb{C}^n\times\mathbb{C}$
be a bounded pseudoconvex domain satisfying
\[
\Omega\subset
\left\{
(z,w)\in\mathbb{C}^{n+1}
\mid |z|<1
\right\},
\]
where $z=(z^1,z^2,\dots,z^n)$ denotes the coordinates
on $\mathbb{C}^n$ and $w$ the coordinate on $\mathbb{C}$.
Set
\[
\Omega_t:=\Omega\cap\{z=t\},
\]
and assume that $\Omega_0$ is nonempty.
We shall use the following special case of the
sharp Ohsawa--Takegoshi $L^2$ extension theorem
\cite{Be,Bl,GZ}.

\begin{theorem}[Sharp Ohsawa--Takegoshi $L^2$ extension theorem]
\label{thm:sharp_OT}
For every $f(w)\,dw\in A^2(\Omega_0)$,
there exists a holomorphic function
$F\colon\Omega\to\mathbb{C}$
such that $F(0,w)=f(w)$ on $\Omega_0$ and
\[
\frac{1}{\sigma_n}
\int_{\Omega}|F(z,w)|^2\,d\lambda(z,w)
\leq
\|f(w)\,dw\|_{\Omega_0}^2.
\]
Here $d\lambda$ denotes the Lebesgue measure,
and $\sigma_n=\pi^n/n!$ is the Euclidean volume
of the unit ball in $\mathbb{C}^n$.
\end{theorem}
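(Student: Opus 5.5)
The plan is to treat Theorem~\ref{thm:sharp_OT} as a specialization of the general sharp $L^2$ extension theorem of Guan--Zhou \cite{GZ}, in the Blocki form \cite{Bl}, rather than to reprove the $L^2$ machinery. The work is to choose the polar function and the gain so that the constant in their estimate becomes exactly $1/\sigma_n$, and to identify the resulting measure on $\Omega_0$.

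\textbf{Step 1 (choice of data).} Take the closed submanifold $S:=\Omega_0=\Omega\cap\{z=0\}$ of codimension $n$ and the polar function
\[
\psi(z,w):=n\log|z|^2 .
\]
This function is plurisubharmonic and has logarithmic poles of the right order $n$ along $\{z=0\}$. It satisfies $\psi<0$ on $\Omega$, because $\Omega\subset\{|z|<1\}$. Take the trivial weight $\varphi\equiv 0$ and the gain function $c\equiv 1$. With these choices the constant in the Guan--Zhou estimate is $\int_0^\infty c(t)e^{-t}\,dt=1$, since $\sup_\Omega\psi\le 0$.

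\textbf{Step 2 (identifying the measure).} The Guan--Zhou estimate bounds $\int_\Omega|F|^2\,d\lambda$ by $\int_S|f|^2\,dV_S[\psi]$. Here $dV_S[\psi]$ is the residue measure, defined as the limit over $t\to\infty$ of
\[
\int_{\{-t-1<\psi<-t\}}|F|^2e^{-\psi}\,d\lambda
\]
for continuous extensions $F$ of $f$. I will compute it locally. The set $\{-t-1<\psi<-t\}$ is the spherical shell $e^{-t-1}<|z|^{2n}<e^{-t}$ in the $z$-variables, and its Lebesgue volume is $\sigma_n(e^{-t}-e^{-t-1})$. On this shell $e^{-\psi}$ lies between $e^{t}$ and $e^{t+1}$. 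Using the co-area formula in polar coordinates in $z$, the exact computation gives $dV_S[\psi]=\sigma_n\,d\lambda_S$, where $d\lambda_S$ is Lebesgue measure on $\Omega_0$ in the variable $w$. The key point is that $|z|^{2n}$ has volume-distribution function $\sigma_n r$ near $0$. For the $1$-form $f(w)\,dw$ on the one-dimensional fiber, $\|f\,dw\|^2_{\Omega_0}$ equals $\int_{\Omega_0}|f|^2\,d\lambda_S$. Combining these gives exactly
\[
\frac{1}{\sigma_n}\int_\Omega|F|^2\,d\lambda\le\|f\,dw\|^2_{\Omega_0}.
\]

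\textbf{Step 3 (regularity reductions).} The general theorem is usually stated either for Stein manifolds, which covers bounded pseudoconvex $\Omega$, or after exhausting $\Omega$ by smooth strongly pseudoconvex $\Omega^{(j)}\Subset\Omega$. In the exhaustion version, for each $j$ I obtain an extension $F_j$ on $\Omega^{(j)}$ of $f|_{\Omega_0\cap\Omega^{(j)}}$ satisfying the estimate with the same right-hand side. The family $\{F_j\}$ is then bounded in $L^2$ on every fixed $\Omega^{(k)}$, so a diagonal subsequence converges locally uniformly to a holomorphic $F$ on $\Omega$. Fatou's lemma preserves the bound, and $F(0,w)=f(w)$ holds in the limit. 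Any disconnectedness of $\Omega_0$ is harmless here, as remarked earlier in the paper.

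The main obstacle is Step~2: getting the normalization of $dV_S[\psi]$ exactly right. This means checking that the factor $n$ in $\psi$, rather than some other multiple, produces precisely the constant $\sigma_n$ and not $\sigma_n$ up to a factor such as $(1-e^{-1})$. It also means reconciling the conventions of \cite{GZ} and \cite{Bl}, for example the measure being built from $e^{-\psi}$ versus $e^{-\psi}/|z|^{2n}$. I would first verify the normalization on a model case. For $\Omega=\mathbb{B}^n\times\Delta$ and $f=dw$, the trivial extension $F\equiv1$ must give equality, namely $\sigma_n\pi/\sigma_n=\pi$. If it does, I would fix the convention accordingly.
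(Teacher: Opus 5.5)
Your proposal is correct and is essentially what the paper does. The paper gives no independent proof of this statement; it quotes it as a special case of the sharp $L^2$ extension theorem of \cite{Be,Bl,GZ}. Your specialization of the Guan--Zhou theorem to the Stein manifold $\Omega$, with polar function $n\log|z|^2<0$, trivial weight and $c\equiv 1$ (so that $\int_0^\infty c(t)e^{-t}\,dt=1$), is exactly how this case follows from it.

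The normalization you worry about comes out as you compute. Guan--Zhou normalize $dV[\Psi]$ to be Lebesgue measure on $S$ for the model pole $n\log|z|^2$, and they put the ball volume $\pi^n/n!$ in front of their estimate; together these give exactly your unnormalized residue measure $\sigma_n\,d\lambda_S$. The exhaustion in your Step~3 is harmless but unnecessary, since their theorem applies directly to Stein $\Omega$.
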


For further details, see, for example,
\cite[pp.~1149--1151]{GZ},
\cite[Theorem~3.1]{BL},
and \cite[\S~4.4.4]{O}.

We recall the logarithmic plurisubharmonicity of the diagonal Bergman
kernel, established by Maitani and Yamaguchi \cite{MY} for Riemann surfaces
and developed further by Berndtsson \cite{Be} and Guan and Zhou \cite{GZ}.
We use the proof of Guan and Zhou
\cite[Section~3.5, pp.~1149--1151]{GZ}, based on their sharp
$L^2$ extension theorem.

\begin{theorem}\label{thm:bergman-log-psh}
The function
\[
z \longmapsto\log k_{\Omega_z}(0)
\]
is plurisubharmonic on $\{z\mid (z,0)\in\Omega\}$.
\end{theorem}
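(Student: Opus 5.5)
The plan is to follow Guan--Zhou: derive the sub-mean-value inequality for $u(z):=\log k_{\Omega_z}(0)$ over balls directly from Theorem~\ref{thm:sharp_OT}, and then upgrade it to plurisubharmonicity once upper semicontinuity is known.

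\emph{Reduction.} Plurisubharmonicity is local and invariant under affine changes of the $z$-variables. The condition $(z,0)\in\Omega$ is open in $z$. So fix a point $z_0$ in $D:=\{z\mid (z,0)\in\Omega\}$ and a small ball $\{|z-z_0|<r\}$. Replace $\Omega$ by $\Omega\cap\{|z-z_0|<r\}$, which is still bounded and pseudoconvex, and rescale $z\mapsto (z-z_0)/r$. After rescaling, the Lebesgue measure and $\sigma_n$ change consistently, and the $w$-variable is untouched. It therefore suffices to prove the sub-mean-value inequality at $z=0$ for the domain contained in $\{|z|<1\}$ with $(0,0)\in\Omega$:
\[
\log k_{\Omega_0}(0)\le \frac{1}{\sigma_n}\int_{|z|<1}\log k_{\Omega_z}(0)\,d\lambda(z).
\]
Here we set $k_{\Omega_z}(0)$ to be $0$ when $(z,0)\notin\Omega$. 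A psh function on the smaller ball only needs the inequality with $\log$ of a quantity that is $\le$ the genuine one, so this convention is harmless. We also need upper semicontinuity of $u$. This follows since $k_{\Omega_z}(0)=\sup\{|f(0)|^2 : \|f\,dw\|_{\Omega_z}\le1\}$ and fibers vary lower semicontinuously for open $\Omega$. One can also avoid this point by noting that the sub-mean-value property on all small balls, together with local boundedness above, gives psh after taking the usc regularization. I would argue usc directly by normal families if needed, and I expect this to be the main technical nuisance rather than the core of the proof.

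\emph{Core step.} Apply Theorem~\ref{thm:sharp_OT} to the extremal form from Lemma~\ref{lem:b-kerbasic}, namely $f(w)\,dw=k_{\Omega_0}(w,0)/k_{\Omega_0}(0)\,dw$. It has $f(0)=1$ and $\|f\,dw\|^2_{\Omega_0}=1/k_{\Omega_0}(0)$. We obtain $F$ holomorphic on $\Omega$ with $F(0,w)=f(w)$ and $\frac1{\sigma_n}\int_\Omega|F|^2\,d\lambda\le 1/k_{\Omega_0}(0)$.

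By Fubini, $\int_\Omega|F|^2=\int_{|z|<1}\|F(z,\cdot)\,dw\|^2_{\Omega_z}\,d\lambda(z)$. Each $F(z,\cdot)\,dw$ is a competitor in Lemma~\ref{lem:b-kerbasic} on $\Omega_z$ with value $a=F(z,0)$. Hence
\[
\frac{1}{\sigma_n}\int_{|z|<1}\frac{|F(z,0)|^2}{k_{\Omega_z}(0)}\,d\lambda(z)\le\frac{1}{k_{\Omega_0}(0)}.
\]
Apply Jensen's inequality to the concave function $\log$ with respect to the probability measure $d\lambda/\sigma_n$ on the unit ball. This gives
\[
\frac1{\sigma_n}\int\bigl(\log|F(z,0)|^2-\log k_{\Omega_z}(0)\bigr)\,d\lambda\le -\log k_{\Omega_0}(0).
\]
The function $\log|F(z,0)|^2$ is psh with value $0$ at $z=0$, so its ball average is $\ge0$. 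Rearranging gives the desired sub-mean-value inequality.

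The only subtlety is the integrability and the $-\infty$ convention where $(z,0)\notin\Omega$ or $F(z,0)=0$. Both are handled by the same Jensen inequality, since the integrand is bounded above: $k_{\Omega_z}(0)$ is bounded below by a positive constant, because $\Omega$ is bounded.

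Finally, the sub-mean-value inequality over all small balls at every point, together with upper semicontinuity, yields plurisubharmonicity of $z\mapsto\log k_{\Omega_z}(0)$.
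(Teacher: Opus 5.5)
\textbf{The gap: ball averages do not give plurisubharmonicity.} Your core computation is the paper's: sharp Ohsawa--Takegoshi applied to the extremal form on $\Omega_0$, then Fubini with Lemma~\ref{lem:b-kerbasic} on each fiber, then Jensen, then the sub-mean property of $\log|F(z,0)|^2$. Normalizing so that $f(0)=1$ instead of $F(0,0)=k_{\Omega_0}(0)$ changes nothing.

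The gap is the final inference. For $n\geq 2$, the sub-mean-value inequality over all small balls in $\mathbb{C}^n$, together with upper semicontinuity, only makes $u$ subharmonic as a function of $2n$ real variables. It does not make $u$ plurisubharmonic. For example, $|z^1|^2-\tfrac12|z^2|^2$ satisfies the ball sub-mean inequality at every point but is not plurisubharmonic. So your reduction ``it therefore suffices to prove the sub-mean-value inequality at $z=0$'' is false as stated. The affine invariance you invoke only lets you translate and rescale; it does not turn ball averages into disc averages on complex lines.

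The paper closes this step explicitly. It applies the $n=1$ case of the argument to the restriction of the family to each complex affine line $L$ in the base, i.e.\ to $\Omega\cap(L\times\mathbb{C})$. This is again a bounded pseudoconvex set, now in $\mathbb{C}^2$, with the same fibers $\Omega_z$. Subharmonicity on every complex line plus upper semicontinuity is plurisubharmonicity. Alternatively, you could run your argument in every complex-linear coordinate system on the base and let the resulting ellipsoids degenerate, but slicing is simpler.

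\textbf{Two smaller points.} First, the convention $k_{\Omega_z}(0)=0$ for $(z,0)\notin\Omega$ is not harmless. For such $z$ the value $F(z,0)$ is undefined, so the fiberwise step breaks down. Simply take $r$ so small that the closed ball times $\{0\}$ lies in $\Omega$, as the paper does. Compactness then gives a disc of uniform radius about $0$ in every fiber over the ball. Hence $k_{\Omega_z}(0)$ is bounded above as well as below, so $F(\cdot,0)\in L^2$ and all the logarithms are integrable. Your claim that the integrand is bounded above is not justified, since $\log|F(z,0)|^2$ may be unbounded near the boundary sphere. Jensen only needs $(\log g)_+\le g$ to be integrable anyway.

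Second, your attention to upper semicontinuity is welcome, since the paper does not discuss it, and your argument works. A compact subset of $\Omega_{z_0}$ lies in $\Omega_z$ for $z$ near $z_0$. The Bergman kernel decreases as the domain grows, and it converges along an increasing exhaustion of $\Omega_{z_0}$.
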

\begin{proof}

Fix a point of $\Omega$.
After changing the coordinates,
we may assume that this point is $(0,0)$ and that
$\overline{\mathbb B^n}\times\{0\}\subset\Omega$.
Write
\[
\sigma_n:=\operatorname{Vol}(\mathbb B^n)=\frac{\pi^n}{n!},
\qquad
d\lambda_w:=\frac{\sqrt{-1}}{2}\,dw\wedge d\overline w,
\qquad
d\lambda=d\lambda_z\,d\lambda_w,
\]
where $d\lambda_z$ and $d\lambda$ denote the Lebesgue measures
on $\mathbb C^n$ and $\mathbb C^{n+1}$, respectively.

Apply Theorem~\ref{thm:sharp_OT} to the holomorphic $1$-form
$k_{\Omega_0}(w,0)\,dw$ on $\Omega_0$.
There exists a holomorphic function
$F$ on $p^{-1}(\mathbb B^n)$ such that
\[
F(0,w)=k_{\Omega_0}(w,0)
\]
and
\begin{equation}\label{eq:bergman-sharp-extension}
\frac{1}{\sigma_n}
\int_{p^{-1}(\mathbb B^n)}|F|^2\,d\lambda
\leq
\bigl\|k_{\Omega_0}(w,0)\,dw\bigr\|_{\Omega_0}^2
=
k_{\Omega_0}(0).
\end{equation}

For almost every $z\in\mathbb B^n$, the form $F(z,w)\,dw$
is square integrable on $\Omega_z$.
By Lemma~\ref{lem:b-kerbasic}, we have
\begin{equation}\label{eq:bergman-fiber-extremal}
\frac{|F(z,0)|^2}{k_{\Omega_z}(0)}
\leq
\int_{\Omega_z}|F(z,w)|^2\,d\lambda_w.
\end{equation}
Integrating \eqref{eq:bergman-fiber-extremal} with respect to $z$
and applying Fubini's theorem, we obtain
\begin{equation}\label{eq:bergman-integrated-extremal}
\int_{\mathbb B^n}
\frac{|F(z,0)|^2}{k_{\Omega_z}(0)}\,d\lambda_z
\leq
\int_{p^{-1}(\mathbb B^n)}|F|^2\,d\lambda.
\end{equation}
Combining \eqref{eq:bergman-sharp-extension} and
\eqref{eq:bergman-integrated-extremal} gives
\begin{equation}\label{eq:bergman-extension-chain}
\frac{1}{\sigma_n}
\int_{\mathbb B^n}
\frac{|F(z,0)|^2}{k_{\Omega_z}(0)}\,d\lambda_z
\leq
\frac{1}{\sigma_n}
\int_{p^{-1}(\mathbb B^n)}|F|^2\,d\lambda
\leq k_{\Omega_0}(0).
\end{equation}

The boundedness of $\Omega$ and the inclusion
$\overline{\mathbb B^n}\times\{0\}\subset\Omega$ imply that
$k_{\Omega_z}(0)$ is bounded above and bounded away from zero
on $\mathbb B^n$.
It follows from \eqref{eq:bergman-extension-chain} that
$F(\,\cdot\,,0)\in L^2(\mathbb B^n)$.
Moreover, $F(0,0)=k_{\Omega_0}(0)>0$.
Hence the logarithms occurring below are integrable.

By Jensen's inequality for the concave function $\log$,
\eqref{eq:bergman-extension-chain} yields
\begin{equation}\label{eq:bergman-jensen}
\begin{aligned}
&\frac{1}{\sigma_n}
\int_{\mathbb B^n}
\bigl(\log|F(z,0)|^2-\log k_{\Omega_z}(0)\bigr)
\,d\lambda_z
\\
&\qquad\leq
\log\left(
\frac{1}{\sigma_n}
\int_{\mathbb B^n}
\frac{|F(z,0)|^2}{k_{\Omega_z}(0)}\,d\lambda_z
\right)
\leq \log k_{\Omega_0}(0).
\end{aligned}
\end{equation}
On the other hand, $\log|F(z,0)|^2$ is plurisubharmonic.
Its submean inequality gives
\begin{equation}\label{eq:bergman-holomorphic-submean}
2\log k_{\Omega_0}(0)
=
\log|F(0,0)|^2
\leq
\frac{1}{\sigma_n}
\int_{\mathbb B^n}\log|F(z,0)|^2\,d\lambda_z.
\end{equation}
Combining \eqref{eq:bergman-jensen} and
\eqref{eq:bergman-holomorphic-submean}, we conclude that
\[
2\log k_{\Omega_0}(0)
-
\frac{1}{\sigma_n}
\int_{\mathbb B^n}\log k_{\Omega_z}(0)\,d\lambda_z
\leq \log k_{\Omega_0}(0).
\]
Therefore,
\begin{equation}\label{eq:bergman-submean}
\log k_{\Omega_0}(0)
\leq
\frac{1}{\sigma_n}
\int_{\mathbb B^n}\log k_{\Omega_z}(0)\,d\lambda_z.
\end{equation}

The same argument applies at every point and on every sufficiently
small ball.
Applying the case $n=1$ to the restriction of the family to each
affine complex line in the base,
we find that $z\mapsto\log k_{\Omega_z}(0)$ is plurisubharmonic.
\end{proof}

When the function in Theorem~\ref{thm:bergman-log-psh} is pluriharmonic,
we also obtain the following.

\begin{proposition}\label{prop:kyoutuketasita}
If the function
\[
z \longmapsto\log k_{\Omega_z}(0)
\]
is pluriharmonic on $\{z\mid (z,0)\in\Omega\}$, then
\[
(z, w) \longmapsto \frac{k_{\Omega_z}(w, 0)}{k_{\Omega_z}(0)}
\]
is holomorphic. In particular, if $\log k_{\Omega_z}(0)\equiv 0$, then
$(z, w) \mapsto k_{\Omega_z}(w, 0)$ is holomorphic.
\end{proposition}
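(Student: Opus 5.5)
Fix a point of $\{z\mid (z,0)\in\Omega\}$ and, as in the proof of Theorem~\ref{thm:bergman-log-psh}, change coordinates so that this point is $0$ and $\overline{\mathbb B^n}\times\{0\}\subset\Omega$. It suffices to show holomorphy on $p^{-1}(\mathbb B^n)$ near each such point. Since $\log k_{\Omega_z}(0)$ is pluriharmonic on the ball, I will write it as $\log k_{\Omega_z}(0)=2\,\mathrm{Re}\,h(z)$ with $h$ holomorphic on $\mathbb B^n$ and $h(0)$ real. The plan is to replace $\Omega$ by the biholomorphic image under $(z,w)\mapsto(z,e^{h(z)}w)$. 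This rescales the fibers, and by the transformation law $k$ gets multiplied by $|e^{-h}|^2$ at $0$, which normalizes $k_{\Omega_z}(0)\equiv 1$. The ratio $k_{\Omega_z}(w,0)/k_{\Omega_z}(0)$ transforms by the holomorphic factor $e^{h(z)}$ (times the $w$-rescaling), so holomorphy is preserved. It therefore suffices to treat the case $\log k_{\Omega_z}(0)\equiv 0$, which is the ``in particular'' statement.

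In the normalized case, I rerun the proof of Theorem~\ref{thm:bergman-log-psh}. Take $F$ on $p^{-1}(\mathbb B^n)$ from Theorem~\ref{thm:sharp_OT} with $F(0,w)=k_{\Omega_0}(w,0)$. Since $k_{\Omega_z}(0)=1$, the chain \eqref{eq:bergman-extension-chain}, \eqref{eq:bergman-jensen}, \eqref{eq:bergman-holomorphic-submean} starts and ends at $0=\log k_{\Omega_0}(0)$, so every inequality in it becomes an equality. Equality in \eqref{eq:bergman-holomorphic-submean} says that $\log|F(z,0)|^2$ equals its mean at $0$. By the submean property on all smaller balls this gives harmonicity, and so $\log|F(z,0)|$ is pluriharmonic with mean value zero. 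Equality in Jensen \eqref{eq:bergman-jensen} forces $|F(z,0)|^2$ to be a.e.\ constant, hence $|F(z,0)|\equiv 1$. Being holomorphic, $F(z,0)\equiv F(0,0)=1$. Equality in \eqref{eq:bergman-integrated-extremal} then gives equality in \eqref{eq:bergman-fiber-extremal} for a.e.\ $z$. By the uniqueness part of Lemma~\ref{lem:b-kerbasic}, $F(z,w)=F(z,0)\,k_{\Omega_z}(w,0)/k_{\Omega_z}(0)=k_{\Omega_z}(w,0)$ for a.e.\ $z\in\mathbb B^n$.

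The main obstacle is passing from ``a.e.\ $z$'' to every $z$, and from the ball at one point to a global statement. For the first, I would use that $(z,w)\mapsto k_{\Omega_z}(w,0)$ agrees a.e.\ with the holomorphic $F$. The set of $z$ where equality holds is of full measure. For an arbitrary $z_1$, I would redo the whole argument centered at $z_1$: this produces a holomorphic $F_1$ agreeing with $k_{\Omega_z}(w,0)$ for a.e.\ $z$ near $z_1$ and exactly at $z=z_1$, because $F_1(z_1,\cdot)=k_{\Omega_{z_1}}(\cdot,0)$ by construction. Two holomorphic functions agreeing on a full-measure set of fibers coincide, so $F=F_1$ on the overlap. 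Hence $k_{\Omega_z}(w,0)=F(z,w)$ at every $z$, and it is holomorphic. Since holomorphy is local and the construction is canonical (the ratio is intrinsic), the local conclusions patch together on all of $\{z\mid(z,0)\in\Omega\}$. The general pluriharmonic case then follows from the normalization above.
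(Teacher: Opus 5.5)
Your proof is correct and follows the paper's route: pluriharmonicity forces equality throughout the chain in the proof of Theorem~\ref{thm:bergman-log-psh}, hence equality in \eqref{eq:bergman-fiber-extremal}, and the uniqueness in Lemma~\ref{lem:b-kerbasic} then identifies $k_{\Omega_z}(w,0)/k_{\Omega_z}(0)$ with $F(z,w)/F(z,0)$. The paper does without your preliminary normalization by $h$, which is harmless (the ratio is in fact invariant under $w\mapsto e^{h(z)}w$, not rescaled), and your re-centering argument usefully makes explicit the passage from almost every $z$ to every $z$, which the paper leaves implicit.
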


\begin{proof}
If $z\mapsto\log k_{\Omega_z}(0)$ is pluriharmonic, then
\eqref{eq:bergman-submean} in the proof of
Theorem~\ref{thm:bergman-log-psh} is an equality.
It follows that \eqref{eq:bergman-fiber-extremal} is also an equality:
\[
\frac{|F(z,0)|^2}{k_{\Omega_z}(0)} = \int_{\Omega_z}|F(z,w)|^2\,d\lambda_w
\]
Applying Lemma~\ref{lem:b-kerbasic} on each $\Omega_z$, we obtain
\[
\frac{k_{\Omega_z}(w, 0)}{k_{\Omega_z}(0)} = \frac{F(z, w)}{F(z, 0)}
\]
at each $(z,w)$, which proves the assertion.
\end{proof}

\begin{remark}\label{rmk:newnew}
Let $\Omega\subset \mathbb{B}^n\times\mathbb{C}$ be a pseudoconvex domain
such that $(z,0)\in\Omega$ for every $z\in\mathbb{B}^n$ and
$k_{\Omega_z}(0)\equiv 1$.
If the function $z\mapsto\log k_{\Omega_z}(0)$ is pluriharmonic, this
normalization can be achieved by a holomorphic change of the fiber
coordinate; based on the transition formula \eqref{eq:b-ker_zahyouhenkan} below, consider the exponential of a holomorphic function whose real part coincides with $z\mapsto\log k_{\Omega_z}(0)$.
If, in addition, each fiber $\Omega_z$ is connected and simply connected,
then the projection $\Omega\to\mathbb{B}^n$ is holomorphically locally
trivial.
Indeed, Proposition~\ref{prop:kyoutuketasita} implies that
$(z,w)\mapsto k_{\Omega_z}(w,0)$ is holomorphic.
Integrating $k_{\Omega_z}(w,0)\,dw$ along paths in the fibers starting at
$0$ therefore defines a new holomorphic fiber coordinate, which gives a
local trivialization by Example~\ref{ex:bergman-simply-connected}.
The proof of Theorem~\ref{thm:main2} is based on this observation.
This argument requires neither smoothness of the boundaries of $\Omega$
and its fibers nor any regularity assumptions on the variation of the
fiber boundaries.
In this respect, it differs from approaches using boundary regularity in
earlier work such as \cite{HMY,Z}.
\end{remark}

\subsection{Pseudoeffectivity of the conormal bundle}\label{subsection:conormalPE}

Assume condition $(i)$ of Theorem~\ref{thm:main1}.
Since $p$ is a holomorphic submersion, near each point of $Y$ we can choose
holomorphic coordinates $(z,w)$ in which $p(z,w)=z$.
For sufficiently large $\nu$, the fibers of $\Omega_\nu$ over a fixed small
coordinate ball in $Z$ can then be viewed as bounded open subsets of
$\mathbb{C}_w$; they are domains when $p|_{\Omega_\nu}$ has connected fibers.
In these coordinates, local pseudoconvexity is considered in
$\mathbb{C}^{n+1}$.

We work locally in this sense. Let $U\cong\mathbb{B}^n$ be a coordinate
ball about a point of $Z$, and use this identification to write
$U=\mathbb{B}^n$. Choose $\nu$ sufficiently large that
\[
\Omega_\nu\cap p^{-1}(\mathbb{B}^n) \subset \mathbb{B}^n\times \mathbb{C}_w
\]
can be regarded as an inclusion in the chosen coordinates.
Since $Y$ is a section, we may change only the fiber coordinate $w$ so that
\[
Y\cap p^{-1}(U) =\{(z, w)\in \mathbb{B}^n\times \mathbb{C}_w \mid w=0\}
\]

\begin{lemma}\label{lem:motomoto_Ast}
Assume condition $(i)$ of Theorem~\ref{thm:main1}, without assuming that the
conormal bundle is unitary flat.
Then $N_{Y/X}^*$ admits a singular Hermitian metric with local weights
$\log k_{\Omega_{\nu,z}}(0)$.
These weights are locally bounded and plurisubharmonic.
In particular, $N_{Y/X}^*$ is pseudoeffective.
\end{lemma}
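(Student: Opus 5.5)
Fix $\nu$ large and cover $Z$ by coordinate balls $U_j\cong\mathbb{B}^n$ with coordinates $(z,w_j)$ on $p^{-1}(U_j)$ such that $p(z,w_j)=z$ and $Y\cap p^{-1}(U_j)=\{w_j=0\}$, as arranged just before the lemma. On $U_j$ set
\[
\varphi_j(z):=\log k_{\Omega_{\nu,z}}(0),
\]
where $k$ is computed in the coordinate $w_j$. The plan is to check three things: (a) the $\varphi_j$ glue to the local weights of a singular Hermitian metric on $N_{Y/X}^*$; (b) each $\varphi_j$ is plurisubharmonic; (c) each $\varphi_j$ is locally bounded. Pseudoeffectivity then follows by definition.

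For (a), on $U_j\cap U_k$ write $w_k=g_{kj}(z,w_j)$ with $g_{kj}(z,0)=0$ and $\partial_{w_j}g_{kj}(z,0)=:t_{kj}(z)\neq 0$. The functions $t_{kj}$ are the transition functions of $N_{Y/X}^*$: the class of $dw_j$ restricted to $Y$ is a local frame of the conormal bundle, and $dw_k=t_{kj}\,dw_j$ there. For a fixed $z$, $w_j\mapsto g_{kj}(z,w_j)$ is a biholomorphism of the fiber $\Omega_{\nu,z}$ between its two coordinate descriptions. The Bergman kernel is intrinsically a $(1,0)\otimes(0,1)$-form, which is the content of the transformation formula referred to in Remark~\ref{rmk:newnew}. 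Therefore
\[
k^{(j)}(0)=|t_{kj}(z)|^2\,k^{(k)}(0),
\]
that is, $\varphi_j=\varphi_k+\log|t_{kj}|^2$. This is exactly the transformation rule for the local weights of a metric on $N_{Y/X}^*$ (with frame $dw_j$ and $|dw_j|^2=e^{-\varphi_j}$, up to the sign convention fixed in the paper). The convention needs a quick check against the direction of $t_{kj}$, but in either convention the rule is consistent.

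For (b), apply Theorem~\ref{thm:bergman-log-psh} to $\Omega:=\Omega_\nu\cap p^{-1}(U_j)$ viewed in $\mathbb{B}^n\times\mathbb{C}_{w_j}$. The hypotheses hold for the following reasons. This set is locally pseudoconvex, hence pseudoconvex as a subdomain of $\mathbb{C}^{n+1}$ after shrinking the ball slightly, since local pseudoconvexity in $\mathbb{C}^{n+1}$ is the relevant notion. It is bounded once $\nu$ is large, because the $\Omega_\nu$ form a fundamental system and $Y$ is compact, so $\Omega_\nu\cap p^{-1}(\overline{U_j})$ lies in a relatively compact coordinate chart. Its fibers may be disconnected, but the preliminary discussion explicitly allows this: one works on the component containing $0$.

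For (c), use the bounded sandwich $\{|w_j|<r\}\subset\Omega_{\nu,z}\subset\{|w_j|<R\}$ for $z$ in a slightly smaller ball. The inner inclusion holds since $\Omega_\nu$ is an open neighborhood of the compact set $Y\cap p^{-1}(\overline{U_j'})$; the outer one holds by boundedness. Monotonicity of the Bergman kernel under inclusion, which is immediate from Lemma~\ref{lem:b-kerbasic}, together with Example~\ref{ex:bergman-simply-connected}, then gives
\[
\frac{1}{\pi R^2}\le k_{\Omega_{\nu,z}}(0)\le \frac{1}{\pi r^2}.
\]

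The main obstacle is mild: making sure the domain handed to Theorem~\ref{thm:bergman-log-psh} is really a pseudoconvex domain in $\mathbb{C}^{n+1}$ (local pseudoconvexity near boundary points over the interior of $U_j$, with $U_j$ shrunk), and keeping track of the sign convention in (a).
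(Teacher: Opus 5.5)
Your proposal is correct and follows essentially the same route as the paper. You glue the weights via the coordinate-change law for the fiberwise Bergman kernel, and you get plurisubharmonicity from Theorem~\ref{thm:bergman-log-psh}.

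You also spell out two points the paper leaves implicit:
\begin{itemize}
\item Local boundedness: your inclusion of each fiber between two discs, combined with monotonicity of the Bergman kernel, makes explicit what the paper only remarks inside the proof of Theorem~\ref{thm:bergman-log-psh}.
\item Sign convention: your relation $\varphi_j=\varphi_k+\log|t_{kj}|^2$ with $|dw_j|^2=e^{-\varphi_j}$ is exactly the right transformation rule, so no convention ambiguity remains.
\end{itemize}
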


\begin{proof}
In two fiber coordinates $w$ and $w'$, write
$K_{\Omega_z}(w',w')=k'_{\Omega_z}(w',w')\,dw'\otimes d\overline{w'}$.
Since
\[
k'_{\Omega_z}(w', w')\,dw'\otimes d\overline{w'} = k_{\Omega_z}(w, w)\,dw\otimes d\overline{w}
\]
we have
\begin{equation}\label{eq:b-ker_zahyouhenkan}
k'_{\Omega_z}(w', w')=k_{\Omega_z}(w, w)\cdot \left|\frac{dw}{d w'}\right|^2
\end{equation}
This is the required transformation law for the local weights of a
Hermitian metric on the conormal bundle.
Their plurisubharmonicity follows from
Theorem~\ref{thm:bergman-log-psh}.
\end{proof}

\begin{remark}\label{rmk:daijinarmk}
Since $Z$ is compact, it has a finite covering $\{U_j\}_{j\in J}$ by
coordinate balls, with $\#J<\infty$ and $U_j\cong\mathbb{B}^n$.
Write $z_j=(z_j^1,z_j^2,\dots,z_j^n)$ for local coordinates defined on a
neighborhood of $\overline{U_j}$.
Under condition $(i)$ of Theorem~\ref{thm:main1}, the preceding discussion
shows that, for sufficiently large $\nu$, we can choose local coordinates
$(z_j,w_j)$ on a neighborhood of $p^{-1}(\overline{U_j})\cap\Omega_\nu$,
with $w_j\in\mathbb{C}$, so that
$p^{-1}(U_j)\cap\Omega_\nu\subset\mathbb{B}^n\times\mathbb{C}$,
$Y\cap p^{-1}(U_j)=\{(z_j,w_j)\in\mathbb{B}^n\times\mathbb{C}\mid w_j=0\}$,
and $p(z_j,w_j)=z_j$.
Since $J$ is finite, there is also an $r>0$ such that, for every $j\in J$,
\begin{equation}\label{eq:smalldischaitteru}
\{(z_j, w_j)\in \mathbb{B}^n\times \mathbb{C}\mid |w_j| < r\}\subset p^{-1}(U_j)\cap \Omega_\nu
\end{equation}
The union of these product neighborhoods over $j\in J$ is a smaller open
neighborhood of $Y$, but it need not be pseudoconvex.
Thus, when considering fiberwise Bergman kernels below, we use the actual
fibers of $\Omega_\nu$, which vary pseudoconvexly, rather than replace them
by these smaller discs.
If the conormal bundle $N_{Y/X}^*$ is unitary flat, we may further arrange
that, whenever $U_j\cap U_k\ne\emptyset$, there is a locally constant
$\mathrm{U}(1)$-valued function $t_{jk}$ on $s(U_j\cap U_k)$ such that
\begin{equation}\label{eq:conormalgaunitaryflat}
dw_j = t_{jk}\,dw_k\qquad\text{on}\ s(U_j\cap U_k)\ (\subset Y)
\end{equation}
Indeed, unitary flatness and the fact that $dw_j$ trivializes $N_{Y/X}^*$
on $s(U_j)$ give nowhere-vanishing holomorphic functions
$f_j\colon s(U_j)\cap Y\to\mathbb{C}^*$ satisfying
\[
\left|\frac{f_j\,dw_j}{f_k\,dw_k}\right|\equiv 1\qquad\text{on}\ s(U_j\cap U_k)\ (\subset Y)
\]
Replacing $w_j$ by $(f_j\circ s\circ p)\cdot w_j$ then gives
\eqref{eq:conormalgaunitaryflat}.

Whenever $Y$ admits a fundamental system of locally pseudoconvex
neighborhoods and $N_{Y/X}^*$ is unitary flat, we adopt the following
conventions:
\begin{itemize}
\item Fix $\nu$ as above and set $\Omega:=\Omega_\nu$;
\item Replace $p$ by $p|_\Omega$, so that $p^{-1}(U_j)$ means
$p^{-1}(U_j)\cap\Omega$ unless otherwise stated;
\item Choose the local coordinates $(z_j,w_j)$ on $p^{-1}(U_j)$ so that
\eqref{eq:smalldischaitteru} holds for some common $r>0$ and all $j\in J$,
and \eqref{eq:conormalgaunitaryflat} holds for locally constant
$\mathrm{U}(1)$-valued functions $t_{jk}$ on $s(U_j\cap U_k)$;
\item Identify $U_j$ with $\mathbb{B}^n$ and each fiber $p^{-1}(z)$ with
a bounded open subset of $\mathbb{C}$.
\end{itemize}
By \eqref{eq:b-ker_zahyouhenkan}, the value of $k_{\Omega_z}(w_j)$ at
$w_j=0$, that is, at $s(z)\in Y$, is independent of the choice of $j\in J$
with $z\in U_j$.
Thus, via $p|_Y\colon Y\cong Z$, we obtain a globally defined function
\begin{equation}\label{eq:zwokomegaz0niokurufunc}
Z\ni z\mapsto k_{\Omega_z}(0) \in \mathbb{R}_{>0}
\end{equation}
on $Z$.
Lemma~\ref{lem:logkgaphninaru} below shows that this function is constant.
Its value changes under a common rescaling of the fiber coordinates,
as follows again from \eqref{eq:b-ker_zahyouhenkan}.
We therefore also normalize the coordinates so that
\begin{equation}\label{eq:0}
\log k_{\Omega_z}(0) \equiv 0
\end{equation}
\end{remark}

\begin{lemma}\label{lem:logkgaphninaru}
The function on $Z$ defined by \eqref{eq:zwokomegaz0niokurufunc} is constant.
\end{lemma}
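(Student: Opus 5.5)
The plan is to show that the function $\phi(z):=\log k_{\Omega_z}(0)$ on $Z$ is plurisubharmonic and continuous. Then, since $Z$ is compact and connected, the maximum principle forces it to be constant.

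\emph{Well-definedness and plurisubharmonicity.} By Remark~\ref{rmk:daijinarmk}, the unitary flatness \eqref{eq:conormalgaunitaryflat} gives $|dw_j/dw_k|=1$ along $Y$. The transformation law \eqref{eq:b-ker_zahyouhenkan} then shows that $\phi$ is a well-defined global function on $Z$. On each $U_j\cong\mathbb{B}^n$, the set $p^{-1}(U_j)\cap\Omega$ is a pseudoconvex domain in $\mathbb{B}^n\times\mathbb{C}$. It is bounded after possibly shrinking $U_j$ slightly, and it contains the zero section. Hence Theorem~\ref{thm:bergman-log-psh}, or equivalently Lemma~\ref{lem:motomoto_Ast}, shows that $\phi|_{U_j}$ is plurisubharmonic.

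\emph{Upper semicontinuity and boundedness.} Plurisubharmonicity already includes upper semicontinuity, so $\phi$ is an upper semicontinuous function on the compact set $Z$. It therefore attains its maximum at some point $z_0\in Z$. Local boundedness is also available: by \eqref{eq:smalldischaitteru}, each fiber contains the disc $\{|w_j|<r\}$, and each fiber lies in a fixed bounded set. Monotonicity of the Bergman kernel under inclusion of domains then gives two-sided bounds on $k_{\Omega_z}(0)$. This bound is not strictly needed for the maximum argument.

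\emph{Maximum principle.} A plurisubharmonic function on a connected complex manifold that attains its global maximum at an interior point is constant. To see this, note that the set $\{\phi=\max\phi\}$ is closed by upper semicontinuity. It is also open, by the sub-mean value inequality \eqref{eq:bergman-submean} on small balls: if $\phi(z_0)=\max\phi$, then $\phi\le\phi(z_0)$ on a ball, while the average of $\phi$ over the ball is at least $\phi(z_0)$. Hence $\phi=\phi(z_0)$ almost everywhere on the ball, and then everywhere on it by the standard property that a plurisubharmonic function equals the limit of its ball averages. Since $Z$ is connected, $\phi$ is constant.

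The main obstacle is checking that Theorem~\ref{thm:bergman-log-psh} really applies globally. This needs boundedness of the local pieces, which is handled by the coordinate choices in Remark~\ref{rmk:daijinarmk}. It also needs the fibers to be possibly disconnected bounded open sets, which is allowed by the conventions at the start of \S\ref{subsection:bergmanker_optimalOT}. Once $\phi$ is known to be a global plurisubharmonic function on compact $Z$, constancy is immediate.
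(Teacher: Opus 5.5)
Your argument is correct and is essentially the paper's proof: Theorem~\ref{thm:bergman-log-psh} makes $\log k_{\Omega_z}(0)$ locally plurisubharmonic on $Z$, and the maximum principle on the compact connected manifold $Z$ then forces it to be constant. Your extra steps (attaining the maximum by upper semicontinuity, and the open--closed argument for the maximum set via the sub-mean value inequality) just unpack the paper's one-line appeal to the maximum principle; the continuity announced in your first sentence is never proved, but it is also never needed.
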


\begin{proof}
By Theorem~\ref{thm:bergman-log-psh}, the logarithm of this function is
locally plurisubharmonic. Since $Z$ is compact and connected, the maximum
principle implies that it is constant.
\end{proof}

\section{Proofs of the main results}

\subsection{Proof of Theorem \ref{thm:main1}}\label{subsection:maintheoremproof1}

We prove Theorem~\ref{thm:main1}.
The implication $(ii)\implies (i)$ is clear. 
The implication $(iii)\implies(i)$ follows as well. Indeed, if $Y$ admits
a holomorphic tubular neighborhood $\Omega$ in $X$, then
\[
\Omega_\nu:=\Phi^{-1}\left(\left\{\xi \in N_{Y/X} \left|\, |\xi|_h < \frac{1}{\nu+1}\right.\right\}\right)
=\Phi^{-1}\left(\left\{\xi \in N_{Y/X} \left|\, \log |\xi|_h +\log(\nu+1)<0\right.\right\}\right)
\]
defines a fundamental system $\{\Omega_\nu\}_{\nu\in\mathbb{Z}_{\geq 0}}$
as in $(i)$, where $h$ is a flat metric on $N_{Y/X}$ and
$\Phi\colon\Omega\to N_{Y/X}$ is as in
\eqref{eq:holomorphictubularneighborhoodnodefnodoukei}. 
This argument alone does not establish $(iii)\implies(ii)$, since no
relation between $s\circ p$ and $\pi\circ\Phi$ is given a priori,
where $\pi\colon N_{Y/X}\to Y$ is the projection.

We now assume that a system $\{\Omega_\nu\}_{\nu\in\mathbb{Z}_{\geq 0}}$
as in $(i)$ exists, and prove $(ii)$ and $(iii)$.
We use the notation and conventions of Remark~\ref{rmk:daijinarmk}.
The proof has three steps:
\begin{description}
\item[Step 1] For a fixed coordinate chart $U_j$, show that the function
$(z_j,w_j)\mapsto k_{\Omega_{z_j}}(w_j,0)$ is holomorphic on $p^{-1}(U_j)$.
\item[Step 2] On a neighborhood of each $s(U_j)$, construct a new fiber
coordinate $\xi_j$ by integrating $k_{\Omega_{z_j}}(w_j,0)\,dw_j$.
Compute the transition relations near $s(U_j\cap U_k)$ and verify that
the functions $|\xi_j|$ glue to a global function near $Y$.
\item[Step 3] Obtain $(ii)$ by considering
$\textstyle\bigcup_{j\in J}\{|\xi_j|<\ve\}$ for sufficiently small
$\ve>0$, and obtain $(iii)$ from the coordinates $\{(z_j,\xi_j)\}_{j\in J}$.
\end{description}

\subsubsection{Step 1: Analysis in a fixed coordinate neighborhood}

Fix $j\in J$ and consider $p^{-1}(U_j)$.
By the convention in Remark~\ref{rmk:daijinarmk}, this is the part of the
locally pseudoconvex neighborhood $\Omega$ over $U_j\cong\mathbb{B}^n$,
viewed in $\mathbb{B}^n\times\mathbb{C}$.
Since $j$ is fixed, we write $(z,w)$ for $(z_j,w_j)$ and identify $U_j$
with $\mathbb{B}^n$.
We are thus in the setting and notation of
\S\ref{subsection:bergmanker_optimalOT}.
By \eqref{eq:0} and Proposition~\ref{prop:kyoutuketasita}, we have that $k_{\Omega_z}(w, 0)$ is holomorphic. 


\subsubsection{Step 2: Transition relations and integration of $k_{\Omega_z}(w_j,0)\,dw_j$}

Fix $z\in Z$ and an orthonormal basis $\{\eta_\nu\}_{\nu=0}^\infty$ of
$A^2(\Omega_z)$.
For $j,k\in J$ with $z\in U_j\cap U_k$, we compare the expressions for
$k_{\Omega_z}(\bullet,0)$ and the associated holomorphic $1$-forms in the
coordinates $w_j$ and $w_k$ on this fixed fiber.
Write
\[
\eta_\nu = f_{\nu,j}(w_j)\,dw_j = f_{\nu,k}(w_k)\,dw_k 
\]
Then
\begin{align*}
k_{\Omega_z}(w_j, w_j') &= \sum_{\nu=0}^\infty f_{\nu,j}(w_j)\cdot \overline{f_{\nu,j}(w_j')},\\
k_{\Omega_z}(w_k, w_k') &= \sum_{\nu=0}^\infty f_{\nu,k}(w_k)\cdot \overline{f_{\nu,k}(w_k')}
\end{align*}
and hence
\begin{align*}
k_{\Omega_z}(w_j, 0)\,dw_j &= \left(\sum_{\nu=0}^\infty f_{\nu,j}(w_j)\cdot \overline{f_{\nu,j}(0)}\right)\,dw_j
= \sum_{\nu=0}^\infty \overline{f_{\nu,j}(0)}\cdot \eta_\nu,\\
k_{\Omega_z}(w_k, 0)\,dw_k &= \left(\sum_{\nu=0}^\infty f_{\nu,k}(w_k)\cdot \overline{f_{\nu,k}(0)}\right)\,dw_k
= \sum_{\nu=0}^\infty \overline{f_{\nu,k}(0)}\cdot \eta_\nu
\end{align*}
Since $f_{\nu,j}(w_j)\,dw_j=f_{\nu,k}(w_k)\,dw_k$, we have
\[
f_{\nu,j}(0)\,dw_j = f_{\nu,k}(0)\,dw_k \qquad \text{on}\ s(U_j\cap U_k)
\]
By \eqref{eq:conormalgaunitaryflat}, it follows that
\[
f_{\nu,j}(0)\cdot t_{jk} = f_{\nu,k}(0)
\]
We therefore obtain
\begin{equation}\label{eq:komagazdwjnojkdenohenkangatjk}
k_{\Omega_z}(w_k, 0)\,dw_k = \overline{t_{jk}}\, k_{\Omega_z}(w_j, 0)\,dw_j
\end{equation}

We now allow the base point $z$ to vary.
Recall that the coordinates $(z_j,w_j)$ were chosen so that
\eqref{eq:smalldischaitteru} holds for a common $r>0$ and all $j\in J$.
For each coordinate chart $U_j$, define a function
\[
\xi_j \colon \{(z_j, w_j)\in U_j\times \mathbb{C} \mid |w_j|<r\} \to \mathbb{C}
\]
as follows. For $z_j\in U_j$ and $x=(z_j,a)$ with $|a|<r$, let $\Gamma_x$
be a path in the fiber disc
$\{(z_j,w_j)\in U_j\times\mathbb{C}\mid |w_j|<r\}$ from $w_j=0$ to
$w_j=a$, and set
\begin{equation}\label{eq:komegaw0dwwosekibunsitexiwotukurutoko}
\xi_j(x) := \int_{\Gamma_x}k_{\Omega_z}(w_j, 0)\,dw_j
\end{equation}
By Step~1, the integrand is holomorphic. Since the integration is over a
disc, the value is independent of the choice of $\Gamma_x$, and $\xi_j$
is holomorphic.
Moreover, \eqref{eq:komagazdwjnojkdenohenkangatjk} gives
$\xi_k=\overline{t_{jk}}\,\xi_j$.
As $t_{jk}$ takes values in $\mathrm{U}(1)$, this is equivalent to
\begin{equation}\label{eq:xinohenkan}
\xi_j = t_{jk}\,\xi_k
\end{equation}
on the intersection
\[
\{(z_j, w_j)\in U_j\times \mathbb{C} \mid |w_j|<r\} \cap \{(z_k, w_k)\in U_k\times \mathbb{C} \mid |w_k|<r\}.
\]
Here and below, we write $t_{jk}$ also for the pullback $t_{jk}\circ s\circ p$
of $t_{jk}\colon s(U_j\cap U_k)\to\mathrm{U}(1)$.

\subsubsection{Step 3: Completion of the proof}

The construction of $\xi_j$ and the normalization \eqref{eq:0} give
\[
\xi_j\circ s \equiv 0,\qquad d\xi_j = k_{\Omega_z}(0)\,dw_j = dw_j \quad \text{on}\ s(U_j)
\]
Thus there is a neighborhood $V_j$ of $s(U_j)$ in $p^{-1}(U_j)$ on which
$\xi_j$ is a defining function for $Y\cap V_j$ and $(z_j,\xi_j)$ forms a
system of local coordinates.

Together with \eqref{eq:xinohenkan}, the neighborhoods
$\textstyle\bigcup_{j\in J}\{|\xi_j|<\ve\}$, for $0<\ve\ll1$, give the assertion $(ii)$.
By \eqref{eq:conormalgaunitaryflat}, \eqref{eq:xinohenkan}, and
$p(z_j,w_j)=z_j$, the maps
\[
V_j\ni (z_j, \xi_j)\mapsto \xi_j\cdot \left.\frac{\del}{\del w_j}\right|_{s(z_j)} \in (\text{The total space of}\ N_{Y/X})
\]
glue to a holomorphic open embedding of $\textstyle\bigcup_{j\in J}V_j$
onto a neighborhood of the zero section in the total space of $N_{Y/X}$, which show the assertion $(iii)$.
This proves Theorem~\ref{thm:main1}. \hfill $\Box$

\subsection{Proof of Theorem \ref{thm:main2}}\label{subsection:maintheoremproof2}

Assume condition $(ii)$, which in particular implies $(i)$.
For every sufficiently large $\nu$, we are in the setting of
Remark~\ref{rmk:daijinarmk}, so the preceding argument applies with
$\Omega=\Omega_\nu$.
Repeat that argument through the construction
\eqref{eq:komegaw0dwwosekibunsitexiwotukurutoko}.
Since each fiber $\Omega_z$ is connected and simply connected, the
functions $\xi_j$ defined by this integral are now well defined on all of
$p^{-1}(U_j)$, that is, on $p^{-1}(U_j)\cap\Omega$ in the original notation.
By Example~\ref{ex:bergman-simply-connected} and \eqref{eq:0},
each $\xi_j$ maps each fiber over $U_j$ biholomorphically onto
$\{\xi\in\mathbb{C}\mid |\xi|<1/\sqrt{\pi}\}$.
Using the coordinates $(z_j,\xi_j)$ as in Step~3 above gives the assertion.
\hfill $\Box$

\section{Application}\label{section:localautsusp}

As explained in \S\ref{section:introduction}, a main motivation for this
work is Ueda's example \cite[\S~5.4]{U}.
It is constructed from a germ $f\in\mathcal{O}_{\mathbb{C},0}$ of a
holomorphic function in one variable of the form
\begin{equation}\label{eq:ftteiukansuu}
f(w)=\lambda w + \sum_{n=2}^\infty a_n\,w^n \in \mathcal{O}_{\mathbb{C}, 0}\qquad (\lambda\in \mathbb{C}^*,\ a_2, a_3,\dots\in\mathbb{C}). 
\end{equation}
More precisely, the example is obtained by \emph{suspension} over an
elliptic curve, with monodromy in one direction given by the prescribed
germ $f$. Following \cite{K}, we describe a slightly more general version
of this construction and discuss the consequences of our main results.

Let $f$ be as in \eqref{eq:ftteiukansuu}, and let
$Z=\mathbb{C}^*/\sim_{\widetilde{\mu}}$ be a smooth elliptic curve, where
$\widetilde{\mu}$ is a constant such that $\mu:=|\widetilde{\mu}|$ satisfies
$0<\mu<1$, and $\sim_{\widetilde{\mu}}$ is the equivalence relation on
$\mathbb{C}^*:=\mathbb{C}\setminus\{0\}$ generated by
$z\sim_{\widetilde{\mu}}\widetilde{\mu}\cdot z$.
We denote by $q\colon\mathbb{C}^*\to Z$ the natural quotient map.
As shown in \cite[\S~2]{K}, there exists a smooth complex surface $X$
with a holomorphic surjective submersion $p\colon X\to Z$ and a
holomorphic section $s\colon Z\to X$ satisfying the following conditions:
\begin{itemize}
\item There is a non-singular holomorphic foliation $\mathcal{F}$ on a
neighborhood of $Y:=s(Z)$ in $X$, with $Y$ as a leaf;
\item The holonomy morphism
${\rm Hol}_{\mathcal{F},Y}\colon\pi_1(Y,*)\to\mathcal{O}_{\mathbb{C},0}$
of $\mathcal{F}$ along $Y$ satisfies
\[
{\rm Hol}_{\mathcal{F}, Y}(\gamma_1)(w)=w,\ {\rm Hol}_{\mathcal{F}, Y}(\gamma_2)(w)=f(w), 
\]
where $\gamma_1:=[t\mapsto q(e^{2\pi\sqrt{-1}t})]$ and
$\gamma_2:=[t\mapsto q(e^{-tL})]$ in $\pi_1(Z,q(1))$,
with $0\leq t\leq1$ and $e^L=\widetilde{\mu}$.
\end{itemize}
Moreover, the germ of the neighborhood of $Y$ in $X$ is uniquely
determined by the germ $f$ in \eqref{eq:ftteiukansuu} and the constant
$\widetilde{\mu}\in\mathbb{C}$ with $0<|\widetilde{\mu}|<1$
\cite[\S~2]{K}.

The results of \cite{K,KO}, together with Ueda's classification \cite{U},
can be summarized as follows.
\begin{itemize}
\item If $\lambda\not\in\mathrm{U}(1)$, if $\lambda\in\mathrm{U}(1)$ is
 torsion and some iterate of $f$ is the identity, or if $0$ lies in the
 interior of a Siegel disc of $f$, then $Y$ admits a pseudoflat fundamental
 system of neighborhoods \cite[Theorem~1.2~(ii)]{KO}. These are the cases in classes $(\beta')$ and
 $(\beta'')$ of Ueda's classification.
\item If $\lambda\in\mathrm{U}(1)$ is torsion and no iterate of $f$ is the
 identity, then $Y$ admits a fundamental system of neighborhoods in $X$
 with strongly pseudoconcave boundaries \cite[Theorem 1 and its corollary]{U} (see also \cite[Theorem~1.1~(ii)]{K} and \cite[Theorem~1.2~(i)]{KO}). This is class $(\alpha)$ in Ueda's
 classification.
\item Suppose that $\lambda\in\mathrm{U}(1)$ is non-torsion and $f$ is not
 linearizable near $0$. This generalizes Ueda's example
 \cite[\S~5.4]{U}, and $(Y,X)$ belongs to class $(\gamma)$
 \cite[Theorems~1.1 and~1.2]{KO}.
 In this case, for every open neighborhood $\Omega\subset X$ of $Y$ and
 every continuous function $\Phi\colon\Omega\to(-\infty,+\infty]$ such
 that $\Phi|_{\Omega\setminus Y}$ is plurisubharmonic, $\Phi$ is bounded
 above near $Y$ \cite[Theorem~1.5]{KO}
 (see also \cite[Theorem~1.1~(iii)]{K} for Ueda's example).
 Here $(-\infty,+\infty]$ is equipped with the topology homeomorphic to
 that of the interval $(0,1]$.
\end{itemize}

In the present setting, Ueda's classification \cite{U} compares the
line bundle defined by the divisor $Y$ with the unitary flat extension
of $N_{Y/X}$ to a neighborhood of $Y$.
In classes $(\beta')$ and $(\beta'')$, these bundles are holomorphically
isomorphic on a neighborhood of $Y$; in class $(\alpha)$, they fail to
be isomorphic on some finite-order infinitesimal neighborhood of $Y$;
and in class $(\gamma)$, they are formally isomorphic along $Y$ but
are not holomorphically isomorphic on any neighborhood of $Y$.

The example constructed by Ueda in \cite[\S~5.4]{U} was the first
example of class $(\gamma)$.
In the elliptic-curve case, it is an example $p\colon X\to Z$ in the
above setting, with a holomorphic section $Y=s(Z)$ and $n=1$, such that,
for every positive integer $m_0$ and every sufficiently small
neighborhood $\Omega$ of $Y$ in $X$, there exist an integer $m>m_0$
and a compact non-singular complex curve
$\Gamma\subset\Omega\setminus Y$ homologous to $mY$ in $X$.

The results recalled above impose restrictions on plurisubharmonic
functions near $Y$ in class $(\gamma)$, including Ueda's example.
However, they do not by themselves determine whether $Y$ admits a
fundamental system of locally pseudoconvex neighborhoods.
Our main theorem yields the following.

\begin{corollary}\label{cor:main}
In the above construction, the following are equivalent:
\begin{enumerate}
\item[$(i)$] The germ $f$ is holomorphically linearizable at $0$,
that is, there is a local holomorphic change of coordinate
$u=\psi(w)$, with $\psi(0)=0$ and $\psi'(0)\ne0$, such that
\[
(\psi\circ f\circ\psi^{-1})(u)=\lambda u
\]
for $u$ sufficiently close to $0$.
\item[$(ii)$] The holomorphic line bundle $[Y]$ associated with the
divisor $Y$ admits a $C^\infty$ Hermitian metric whose Chern curvature
is semi-positive at every point of $X$.
\item[$(iii)$] $Y$ admits a holomorphic tubular neighborhood in $X$.
\item[$(iv)$] $Y$ admits a fundamental system of locally pseudoconvex
neighborhoods in $X$.
\end{enumerate}
In particular, in Ueda's example \cite[\S~5.4]{U}, $Y$ does not admit
a fundamental system of locally pseudoconvex neighborhoods in $X$.
\end{corollary}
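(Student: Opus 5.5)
The plan is to prove the cycle of implications $(i)\Rightarrow(iii)\Rightarrow(iv)\Rightarrow(iii)\Rightarrow(i)$, together with $(iii)\Leftrightarrow(ii)$. The equivalence $(iii)\Leftrightarrow(iv)$ is Theorem~\ref{thm:main1}, once we check that $N_{Y/X}$ is unitary flat in the suspension construction. This holds because the normal bundle is determined by the linear parts of the holonomy: $\gamma_1\mapsto 1$ and $\gamma_2\mapsto\lambda$. So $N_{Y/X}$ is unitary flat exactly when $|\lambda|=1$.

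The case $|\lambda|\ne1$ needs separate treatment. There $f$ is linearizable by Koenigs' theorem for $0<|\lambda|\ne1$, so $(i)$ holds. Condition $(iii)$ also holds: the linearizing coordinate $u$ is invariant under the holonomy up to multiplication by $\lambda$. This gives a holomorphic defining function whose transition functions are the constants $1,\lambda$. Hence the foliated neighborhood is biholomorphic to a neighborhood of the zero section of the flat bundle $N_{Y/X}$, and $(iv)$ follows by taking sublevel sets of a $\lambda$-equivariant function $|u|^{c}$ (or via the pseudoflat systems of \cite{KO}). I would handle this case quickly and concentrate on $|\lambda|=1$.

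For $(i)\Rightarrow(iii)$, I would use the linearizing coordinate $u=\psi(w)$ on a transversal. Transported along the leaves of $\mathcal{F}$, it defines local fiber coordinates $u_j$ with $u_j=t_{jk}u_k$, $t_{jk}\in\{1,\lambda^{\pm1}\}$ constant. Mapping $x\mapsto u_j(x)\,\partial/\partial w_j|_{s(p(x))}$ then glues to an embedding onto a neighborhood of the zero section. One must make sure that the leafwise transport is compatible with the fibration $p$. Since $Y$ is a leaf and $\mathcal{F}$ is transverse to the fibers near $Y$, the leaf through a point of a fiber meets each nearby fiber once locally. The coordinate $u$ therefore descends to fiberwise coordinates.

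For $(iii)\Rightarrow(i)$, I would argue as follows. Suppose a tubular neighborhood exists. Then $[Y]$ is isomorphic near $Y$ to the flat bundle $N_{Y/X}$ pulled back. In Ueda's terms the pair is of class $(\beta)$, which excludes class $(\gamma)$, and by the summary above class $(\gamma)$ is exactly the case of $\lambda$ non-torsion with $f$ non-linearizable. In the torsion case, class $(\alpha)$ is excluded because $(iii)$ yields, via $\{|\xi|_h<\varepsilon\}$, pseudoflat neighborhoods. Those are incompatible with the strongly pseudoconcave systems of \cite{U}, by a maximum principle argument on the compact curves. Hence some iterate of $f$ is the identity. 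A finite-order germ with $f'(0)=\lambda$ is linearizable by averaging: $\psi=\frac1N\sum_{m<N}\lambda^{-m}f^{m}$.

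For $(ii)$: from $(iii)$, take $\log|\xi|_h^{2}$ as a local weight. Then $[Y]$ carries the singular metric $|\sigma|^2/|\xi|_h^2$, which is smooth and flat near $Y$ since $\sigma/\xi$ is nonvanishing. It can be glued with any smooth metric away from $Y$ after regularized-max adjustments, provided curvature semipositivity on $X\setminus Y$ is available. This is the main obstacle, because $X$ is noncompact and only a neighborhood is canonical. I would first try to shrink $X$ to the neighborhood, where the statement concerns the germ. For $(ii)\Rightarrow(iv)$, I would use $\varphi=-\log|\sigma|_{h}^{2}$, which is plurisubharmonic off $Y$ and tends to $+\infty$ at $Y$. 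The superlevel sets $\{\varphi>c\}$ are then locally pseudoconvex, since their complements near the boundary are sublevel sets of the psh function $-\varphi$ written as $\log|\sigma|^2_h$.
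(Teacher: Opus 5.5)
\textbf{Main gap: condition $(ii)$.} Your argument for $(ii)\Rightarrow(iv)$ has the sign backwards. If $\Theta_h\ge0$, then locally $|\sigma|_h^2=|w|^2e^{-\phi}$ with $\phi$ plurisubharmonic. So off $Y$ the function $\log|\sigma|_h^2=\log|w|^2-\phi$ is plurisuperharmonic. You correctly note that $\varphi=-\log|\sigma|_h^2$ is plurisubharmonic, but you then treat $-\varphi$ as plurisubharmonic as well. The tubes $\{|\sigma|_h<\varepsilon\}=\{\varphi>-2\log\varepsilon\}$ are superlevel sets of a plurisubharmonic function. It is their complements that are locally pseudoconvex; the tubes themselves are weakly pseudoconcave. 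Semi-positivity of $[Y]$ therefore cannot produce pseudoconvex neighborhoods by this mechanism.

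For instance, let $X$ be the total space of $\mathcal{O}_{\mathbb{P}^1}(1)$ and $Y$ its zero section. Then $[Y]\cong p^*\mathcal{O}_{\mathbb{P}^1}(1)$ is semi-positive. But $N_{Y/X}^*\cong\mathcal{O}_{\mathbb{P}^1}(-1)$ is not pseudoeffective, so by Lemma~\ref{lem:motomoto_Ast} no fundamental system of locally pseudoconvex neighborhoods exists.

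What $(ii)$ actually gives is a continuous function $-\log|\sigma|_h^2\colon X\to(-\infty,+\infty]$, plurisubharmonic off $Y$ and unbounded near $Y$. By \cite[Theorem~1.5]{KO} this rules out class $(\gamma)$. Ruling out class $(\alpha)$ ($\lambda$ torsion, $f$ of infinite order) still needs an obstruction to semi-positivity in finite Ueda type, which you do not supply. Your $(iii)\Rightarrow(ii)$ is also left open on all of $X$; on the tubular neighborhood itself the pulled-back flat metric does work. The paper does not attempt any of this by hand. It cites \cite[Theorem~1.2~(ii) and Corollary~1.6]{KO} for the equivalence of $(i)$--$(iii)$ with class $(\beta')$ or $(\beta'')$, and uses Theorem~\ref{thm:main1} only for $(iii)\Leftrightarrow(iv)$.

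\textbf{The rest, and a wrong flatness claim.} Your hands-on treatment of $(i)\Leftrightarrow(iii)$ is a reasonable substitute for the citation of \cite{KO} in that part. That treatment consists of transporting a linearizing coordinate along the leaves, Koenigs and averaging, and excluding classes $(\alpha)$ and $(\gamma)$ via the recalled results. However, your claim that $N_{Y/X}$ is unitary flat exactly when $|\lambda|=1$ is wrong. $N_{Y/X}$ has degree zero on the elliptic curve $Y$, hence is always unitary flat, and this is what the paper uses. Concretely, $|z|^{a}$ is single-valued on $\mathbb{C}^*$ and is multiplied by $\mu^{a}$ under $z\mapsto\widetilde{\mu}z$. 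So for a suitable real $a$, the expression $|u|\,|z|^{a}$ descends to a neighborhood of $Y$ and is $|\xi|_h$ for a flat metric $h$ on $N_{Y/X}$.

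This also shows that your ``$\lambda$-equivariant function $|u|^{c}$'' does not glue as written when $|\lambda|\ne1$, since $|u_j|/|u_k|\in\{1,|\lambda|^{\pm1}\}$. With the correct flatness statement, the separate case $|\lambda|\ne1$ disappears and Theorem~\ref{thm:main1} applies for every $\lambda$.
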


\begin{proof}
By \cite[Theorem~1.2~(ii) and Corollary~1.6]{KO} and the
suspension construction \cite[\S~2]{K}, conditions $(i)$--$(iii)$
are equivalent to $(Y,X)$ being of type $(\beta')$ or $(\beta'')$.
Since $N_{Y/X}$ has degree zero on the elliptic curve $Y$ and is
therefore unitary flat, $(iii)$ and $(iv)$ are equivalent by
Theorem~\ref{thm:main1}.
The last assertion follows since Ueda's example
\cite[\S~5.4]{U} is of type $(\gamma)$.
\end{proof}




\begin{thebibliography}{HMY}
\bibitem[Be]{Be} \textsc{B. Berndtsson}, Subharmonicity properties of the Bergman kernel and some other functions associated to pseudoconvex domains, Ann. Inst. Fourier (Grenoble) {\bf 56}, 6 (2006), 1633--1662.
\bibitem[BL]{BL} \textsc{B. Berndtsson, L. Lempert}, A proof of the Ohsawa--Takegoshi theorem with sharp estimates, J. Math. Soc. Japan {\bf 68}, 4 (2016), 1461--1472.
\bibitem[Bl]{Bl} \textsc{Z. B{\l}ocki}, Suita conjecture and the Ohsawa--Takegoshi extension theorem, Invent. Math. {\bf 193} (2013), 149--158.
\bibitem[GZ]{GZ} \textsc{Q. Guan, X. Zhou},
A solution of an $L^2$ extension problem with an optimal estimate and applications,
Ann. of Math. {\bf 181}, 3 (2015), 1139--1208.
\bibitem[HMY]{HMY} \textsc{S. Hamano, F. Maitani, H. Yamaguchi},
Variation formulas for principal functions, II: Applications to variation for harmonic spans,
Nagoya Math. J. {\bf 204} (2011), 19--56.
\bibitem[K]{K} \textsc{T. Koike}, Plurisubharmonic functions on a neighborhood of a torus leaf of a certain class of foliations, Forum Math. {\bf 31}, 6 (2019), 1457--1466. 
\bibitem[KO]{KO} \textsc{T. Koike, N. Ogawa}, On the neighborhood of a torus leaf and dynamics of holomorphic foliations, Tohoku Math. J. (2) {\bf 77}, 3 (2025), 421--443. 
\bibitem[MY]{MY} \textsc{F. Maitani, H. Yamaguchi}, Variation of Bergman metrics on Riemann surfaces, Math. Ann. {\bf 330} (2004), 477--489.
\bibitem[N]{N} \textsc{T. Nishino}, Nouvelles recherches sur les fonctions enti\`eres de plusieurs variables complexes (II). Fonctions enti\`eres qui se r\'eduisent \`a celles d'une variable, J. Math. Kyoto Univ. {\bf 9} (1969), 221--274.
\bibitem[O]{O} \textsc{T. Ohsawa}, $L^2$ Approaches in Several Complex Variables: Towards the Oka--Cartan Theory with Precise Bounds, second edition, Springer Monographs in Mathematics, Springer, Tokyo, 2018.
\bibitem[U]{U} \textsc{T. Ueda},  On the neighborhood of a compact complex curve with topologically trivial normal bundle, J. Math. Kyoto Univ. {\bf 22} (1982/1983), 583--607. 
\bibitem[Z]{Z} \textsc{S. Zakeri},
Conformal fitness and uniformization of holomorphically moving disks,
Trans. Amer. Math. Soc. {\bf 368}, 2 (2016), 1023--1049.
\end{thebibliography}
\end{document}